\newtheorem{theorem}{Theorem}[section]
\newtheorem{proposition}[theorem]{Proposition}
\newtheorem{corollary}[theorem]{Corollary}
\newtheorem{lemma}[theorem]{Lemma}
\theoremstyle{definition} % all subsequently defined theorems use this style
\newtheorem{definition}[theorem]{Definition}
\newtheorem{remark}[theorem]{Remark}
\newtheorem{notation}[theorem]{Notation}
\newtheorem*{problem}{Problem}
\newenvironment{romanenumerate}
	{\begin{enumerate}

	}
        {
        
        \end{enumerate}
        } % enumerate list with roman labels
\newenvironment{Romanenumerate}
	{\begin{enumerate}

	}
        {
        
        \end{enumerate}
        } % enumerate list with Roman labels
\newenvironment{arabicenumerate}
	{\begin{enumerate}

	}
        {
        
        \end{enumerate}
        } % enumerate list with arabic labels
\newcommand{\sst}{\; | \;}  % "set such that",  vertical bar | with some space
\newcommand{\gst}{\, | \,}  % "group such that", vertical bar with small space, for group presentations
\newcommand{\integers}{\ensuremath{\mathds{Z}}} % the Integers
\newcommand{\naturals}{\ensuremath{\mathds{N}}} % the Natural numbers
\newcommand{\cs}{,\;} % "comma space", a comma followed by some space
\newcommand{\qs}{\:} % "quantifier space", used between quantifiers. Same as medium space
\newcommand{\nclofin}[2]{\mathrm{ncl}_{{#2}}\left({#1}\right)} % normal closure of #1 in #2
\newcommand{\GammaPresentation}{\langle A\,|\,\mathcal{R}\rangle} % presentation for our torsion-free relatively hyperbolic group
\newcommand{\GPresentation}{\ensuremath{\langle Z\gst S\rangle}}% presentation for our arbitrary group G
\newcommand{\Hom}{\mathrm{Hom}}
\newcommand{\wl}[1]{|{#1}|} % word length
\newcommand{\braced}[4]{\left\{\begin{array}{ll} {#1} & {#2} \\ {#3} & {#4} \end{array}\right.}
\newcommand{\bracedTwo}{\braced}
\newcommand{\comment}[1]{} % for multi-line comments
\title{Effective embedding of residually hyperbolic groups into direct products of extensions of centralizers}
\author{Olga Kharlampovich\footnote{Hunter College, CUNY and McGill University, supported by CUNY grant;
e-mail: okharlampovich@gmail.com}\phantom{i} and Jeremy Macdonald\footnote{Carleton University; e-mail: jeremodm@gmail.com}\\
}
\begin{document}

\maketitle{}

\begin{abstract}
For any torsion-free hyperbolic group $\Gamma$ and any group $G$ that is fully residually $\Gamma$, we construct algorithmically
a finite collection of homomorphisms
from $G$ to groups obtained from $\Gamma$ by extensions of centralizers, at least one of which is injective.  When $G$ is residually $\Gamma$, this gives a effective embedding
of $G$ into a direct product of such groups.  We also give an algorithmic construction of a diagram encoding the set of homomorphisms from a given finitely presented group
to $\Gamma$.
\end{abstract}
%\tableofcontents

\section{Introduction}
A group $G$ is \emph{discriminated} by another group $\Gamma$ (or is \emph{fully residually $\Gamma$}) if for every finite set of non-trivial elements $\{g_{1},\ldots, g_{n}\}$ of
$G$ there exists a homomorphism $\phi: G\rightarrow \Gamma$ such that $\phi(g_{i})\neq 1$ for $i=1,\ldots, n$.  If this condition is only required to hold for $n=1$ we say that $G$ is \emph{separated} by $\Gamma$
(or is \emph{residually $\Gamma$}).

The class of fully residually \emph{free} groups (when $\Gamma$ is a non-abelian free group) has been extensively studied in the last 15 years, particularly in connection with
\emph{Tarski's problems} on the elementary theory of a free group (\cite{KM06}, \cite{Sel06}), and now have a well-developed theory.  Many algorithmic problems related to
these groups have been solved in recent years.

Generalizing to the case when $\Gamma$ is a hyperbolic group, much of the theory has been developed and is similar, but many algorithmic questions remain open. This paper is motivated by the following problem.

\begin{problem} Is the elementary theory $\mathrm{Th}(\Gamma)$ of a (torsion-free) hyperbolic group $\Gamma$  decidable? \end{problem}

Notice, that it was proved in \cite{KM05JSJ} that the universal theory of a finitely generated
fully residually free group is decidable and in \cite{Dah09}
that the universal theory of a  hyperbolic group is
decidable. We will give another proof of this result (for torsion-free hyperbolic groups) in Corollary~\ref{Cor:UnivTh}.

Fix throughout this paper a torsion-free hyperbolic group $\Gamma = \GammaPresentation$.
One of the characterizations of finitely generated groups $G$ discriminated by $\Gamma$ is that they embed
into the \emph{Lyndon completion} $\Gamma^{\integers[t]}$ of $\Gamma$, or equivalently, into a group obtained from $\Gamma$ by a series of extensions of centralizers \cite{KM09}.
If $G$ is only separated by $\Gamma$, there is an embedding into a finite direct product of such groups.  The case when $\Gamma$ is a free group was proved by
Kharlampovich and Myasnikov, who also provided an algorithm to construct the embedding (in both the discriminated and separated cases) \cite{KM98b}.

For the general (torsion-free hyperbolic) case, however, the embedding described in \cite{KM09} is not effective.  We provide an algorithm to
construct the embedding of any residually $\Gamma$ group $G$ into a direct product of groups obtained from $\Gamma$ by extensions of centralizers.  When $G$ is
fully residually $\Gamma$, we effectively construct a finite collection of homomorphisms from $G$ into groups obtained from $\Gamma$ by extensions of centralizers, at least
one of which is an embedding (Theorem~\ref{Thm:FinitelyManyEmbeddings}).

The first step of our approach is to use \emph{canonical representatives} for certain elements of $\Gamma$, developed by Rips and Sela in their study of equations
over hyperbolic groups \cite{RS95}, to reduce part of the problem to the free group case.  As a corollary of this reduction, we are able to effectively construct a finite diagram
that describes the complete set $\mathrm{Hom}(G, \Gamma)$ of homomorphisms from an arbitrary finitely presented group $G$ to $\Gamma$
(Theorem~\ref{Thm:EffectiveSolutions}).

\subsection{Algebraic geometry over groups}

We will use throughout the language of \emph{algebraic geometry over groups} \cite{BMR99}.  We recall here some important notions and establish notation.

Let $\Gamma$ be a group generated by a finite set $A$ (`constants'), let $X$ be a finite set (`variables'), and set $\Gamma[X]=\Gamma\ast F(X)$.  Let
$\mathrm{Hom}_{\Gamma}(\Gamma[X], \Gamma)$ denote the set of homomorphisms from $\Gamma[X]$ to $\Gamma$ that are identical on $\Gamma$
(`$\Gamma$-homomorphisms').

To each element $s$ of $\Gamma[X]$ we associate a formal expression `$s=1$' called an
\emph{equation over $\Gamma$}.  A \emph{solution} to `$s=1$' is a homomorphism $\phi\in \mathrm{Hom}_{\Gamma}(\Gamma[X], \Gamma)$ such that
$s^{\phi}=1$.  A subset $S$ of $\Gamma[X]$ corresponds to the \emph{system of equations} `$S=1$' which we also denote `$S(X,A)=1$'.  Define
\[
\Gamma_{S} = \Gamma[X] / \nclofin{S}{\Gamma[X]},
\]
where $\nclofin{S}{\Gamma[X]}$ is the normal closure of $S$ in $\Gamma[X]$,
and note that every solution to the system $S$ factors through $\Gamma_{S}$.  If $\Gamma$ has the presentation $\GammaPresentation$, then
\[
\Gamma_{S} \simeq \langle X,A \, | \, S, \mathcal{R}\rangle.
\]
Define the \emph{radical} $R_{\Gamma}(S)$ of $S$ over $\Gamma$ by
\[
R_{\Gamma}(S) = \{ t\in \Gamma[X] \; | \; \forall_{\phi\in \mathrm{Hom}_{\Gamma}(\Gamma[X], \Gamma)}\, \forall_{s\in S} \; (s^{\phi}=1 \implies t^{\phi}=1) \}
\]
and define the \emph{coordinate group} of $S$ over $\Gamma$ by
\[
\Gamma_{R_{\Gamma}(S)} = \Gamma[X] / R_{\Gamma}(S).
\]
Every solution to $S$ factors through $\Gamma_{R_{\Gamma}(S)}$.  We frequently omit the subscript $\Gamma$ and write $\Gamma_{R(S)}$ instead
of $\Gamma_{R_{\Gamma}(S)}$.

In many cases, we will encounter systems of equations $S$ where $\Gamma_{S}$ is (fully) residually $\Gamma$.  In this case, the radical and normal closure coincide.

\begin{lemma}\label{Lem:ResidualNullstellensatz}
Let $S(X)$ be a system of equations over a group $G$.  If $G_{S}$ is residually $G$, then
\[
R_{G}(S) = \nclofin{S}{G[X]}
\]
and hence $G_{R(S)} = G_{S}$.
\end{lemma}
\begin{proof}
It is always the case that $\nclofin{S}{G[X]}\subset R_{G}(S)$, so assume for contradiction that there exists $w\in R_{G}(S)\setminus \nclofin{S}{G[X]}$.
Then $w\neq 1$ in $G_{S}$, so there exists a homomorphism $\phi:G_{S}\rightarrow G$ such that $w^{\phi}\neq 1$. But $\phi$ is a solution to $S$
and $w\in R_{G}(S)$ so $w^{\phi} =1$, a contradiction.
\end{proof}

%For any subset $\Phi\subset \mathrm{Hom}_{\Gamma}(\Gamma[X], \Gamma)$ we define the \emph{restricted radical with respect to $\Phi$}
%\[
%R_{\Phi}(S) = \{ t\in \Gamma[X] \; | \; \forall_{\phi\in\Phi}\, \forall_{s\in S} \; (s^{\phi}=1 \implies t^{\phi}=1) \}
%\]
%and its associated \emph{restricted coordinate group} $\Gamma_{R_{\Phi}(S)}$.  Every $\phi\in\Phi$ that is a solution to $S$ factors through $\Gamma_{R_{\Phi}(S)}$.

When $S$ is a subset of $F(X)$ we say that the system is \emph{coefficient-free} and we may consider $F(X)$ in place of $\Gamma\ast F(X)$ and
the ordinary set of homomorphisms $\mathrm{Hom}(F(X), \Gamma)$ in place of $\mathrm{Hom}_{\Gamma}(\Gamma[X], \Gamma)$.  In particular, for any
group $G$ presented by $\GPresentation$ we may consider $S$ as a system of equations in variables $Z$.  In the general case, when $S\subset \Gamma[X]$, we may
consider $S$ as a system of equations over any group $G$ that has $\Gamma$ as a fixed subgroup (i.e.  any $G$ in the `category of $\Gamma$-groups').

%\subsubsection*{Quotients of the base group}
We will study the situation in which the same system of equations is considered over both a free group $F=F(A)$ and one of its quotients
$\Gamma=\langle A\,|\, \mathcal{R}\rangle$.
Fix $\pi:F\rightarrow \Gamma$ the
canonical epimorphism.
The map $\pi$ induces an epimorphism $F[X]\rightarrow \Gamma[X]$, also denoted $\pi$, by fixing each $x\in X$.  For a system of equations
$S\subset F[X]$, we study the corresponding system $S^{\pi}\subset\Gamma[X]$.

%For every $F$-homomorphism $\phi:F[X]\rightarrow \Gamma$
%we define a $\Gamma$-homomorphism $\overline{\phi}: \Gamma[X]\rightarrow \Gamma$ by
%\[
%(w^{\pi})^{\overline{\phi}} = w^{\phi\pi}.
%\]
%Any preimage $w$ of $w^{\pi}$ may be used in the definition, and the map is well-defined.

%For every set of homomorphisms
%$\Phi\subset\mathrm{Hom}_{F}(F[X], F)$ we associate two sets of homomorphisms
%\begin{eqnarray*}
%& & \Phi^{\pi}  =  \{\phi\pi\; | \; \phi\in\Phi\}\subset\mathrm{Hom}_{F}(F[X],\Gamma)\\
%& & \overline{\Phi}  =  \{\overline{\phi}\;|\;\phi\in\Phi\}\subset\mathrm{Hom}_{\Gamma}(\Gamma[X], \Gamma)
%\end{eqnarray*}
%where $\mathrm{Hom}_{F}(F[X],\Gamma)$ denotes the set of all homomorphisms $\phi:F[X]\rightarrow \Gamma$ that factor as
%$F[X]\stackrel{\psi}{\rightarrow} F\stackrel{\pi}{\rightarrow}\Gamma$ where $\psi$ is an $F$-homomorphism.  In the case that $\Phi$ is the full set
%$\mathrm{Hom}_{F}(F[X], F)$, $\Phi^{\pi}=\mathrm{Hom}_{F}(F[X],\Gamma)$ and $\overline{\Phi}=\mathrm{Hom}_{\Gamma}(\Gamma[X], \Gamma)$.

%For the system $S$ we consider two restricted radicals with respect to $\Gamma$, namely $R_{\Phi^{\pi}}(S)$ and for $S^{\pi}$ we have $R_{\overline{\Phi}}(S^{\pi})$.

%R_{\Gamma}(S) = \{ t\in \Gamma[X] \; | \; \forall_{\phi\in \mathrm{Hom}_{F}(F[X], \Gamma)}\, \forall_{s\in S} \; (s^{\phi}=1 \implies t^{\phi}=1) \}

\comment{
%The map $\pi$ commutes with taking radicals over $\Gamma$, i.e. $R_{\Gamma}(S^{\pi})=R_{\Gamma}(S)^{\pi}$.
For every system of equations $S$, the relators $\mathcal{R}$ of $\Gamma$
appear in $R_{\Gamma}(S)$.
It follows that
\begin{equation}\label{Eqn:GammaRSasFQuotient}
F[X]/R_{\Gamma}(S) \simeq \Gamma[X]/R_{\Gamma}(S^{\pi}).
\end{equation}
In more compact notation,
\begin{equation}\label{Eqn:FtoGamma}
F_{R_{\Gamma}(S)} \simeq \Gamma_{R(S)}.
\end{equation}

When the radical of a system $S$ over $F$ coincides with its normal closure, the $\overline{\Phi}$-coordinate group
$\Gamma_{R(S)}$ is a quotient of the coordinate group
$F_{R_{\Phi}(S)}$, as follows.
\begin{lemma}
If $R_{\Phi}(S)=\nclofin{S}{F[X]}$, then there is a canonical epimorphism
\[
\gamma: F_{R_{\Phi}(S)} \rightarrow \Gamma_{R_{\overline{\Phi}}(S^{\pi})}.
\]
\end{lemma}
\begin{proof}
We will show that $R_{\Phi}(S)\subset R_{\Phi^{\pi}}(S)$, hence $F_{R_{\Phi^{\pi}}(S)}$ is a quotient of $F_{R_{\Phi}(S)}$ and the result follows from equation (\ref{Eqn:FtoGamma})
above.

Let $w=\Pi_{i=1}^{m} s_{i}^{u_{i}}\in \nclofin{S}{F[X]}$, where $s_{i}\in S$ and $u_{i}\in F[X]$ and let $\phi\pi\in\Phi^{\pi}$
such that $s^{\phi\pi}=1$ for all $s\in S$.  It follows that $s^{\phi}\in \nclofin{\mathcal{R}}{F}$ for all $s\in S$ hence
\[
w^{\phi} = \left(\Pi_{i=1}^{m}(s_{i}^{\phi})^{u_{i}^{\phi}}\right)\in \nclofin{\mathcal{R}}{F}
\]
and $w^{\phi\pi}=(w^{\psi})^{\pi}=1$ so $w\in R_{\Phi^{\pi}}(S)$ as required.

\end{proof}

For an example where the lemma above does not hold, consider the equation `$x^{2}=1$' over
the free group $F(a)$ on one generator and the quotient $\integers_{2}=\langle a\,|\, a^{2}\rangle$ of $F$.  Since $F$ is torsion-free, $R_{F}(x^{2})=\nclofin{x}{F[x]}$ hence
$F_{R(x^{2})}=\langle x,a \,|\, x\rangle\simeq F$.  Over $\integers_{2}$, `$x^{2}=1$' has the non-trivial solution $x=a$ and it follows that
$R_{\integers_{2}}(x^{2})=\nclofin{x^{2}}{\integers_{2}[x]}$ hence $(\integers_{2})_{R(x^{2})}=\langle x,a\,|\, a^{2}, x^{2}\rangle$.
}

\subsection{Toral relatively hyperbolic groups}
A group $G$ that is hyperbolic relative to a collection $\{H_{1},\ldots,H_{k}\}$ of subgroups is called \emph{toral} if $H_{1},\ldots,H_{k}$ are all finitely-generated abelian groups and $G$ is
torsion-free.
Many algorithmic problems in
(toral) relatively hyperbolic groups are decidable, and in particular we take note of the following for later use.
\begin{lemma}\label{Lem:AlgorithmsRelativelyHyperbolic}
In every toral relatively hyperbolic group $G$, the following hold.
\begin{arabicenumerate}
\item The conjugacy problem in $G$, and hence the word problem, is decidable.
\item $G$ satisfies the CSA property (maximal abelian subgroups are malnormal), hence commutation is a transitive relation on the set of non-trivial elements of $G$.
\item There exists an algorithm that, given an element $g\in G$, produces a generating set for $C(g)$.
%\item If $g\in G$ is a hyperbolic element (i.e. not conjugate to any element of any $H_{i}$), then the centralizer $C(g)$ of $g$ is an infinite cyclic group.
%Further, a generator for $C(g)$ can be effectively constructed.
%\item If $g\in G$ is a parabolic element (i.e. conjugate to an element of some $H_{i}$), then $C(g)=H_{i}^{h}$ for some element $h$.  Further, a generating set
%for $C(g)$ can be effectively constructed.
\end{arabicenumerate}
\end{lemma}
\begin{proof}
The word problem was solved in \cite{Far98} and the conjugacy problem in \cite{Bum04}.  The CSA property is proved in \cite{KM09} and in \cite{Gro09}, and
commutation-transitivity follows from CSA \cite{MR96}.

For the third statement, we first determine, using Theorem~5.6 of \cite{Osi06Memoirs}, whether $g$ is parabolic (conjugate to an element of some $H_{i}$)
or hyperbolic (not conjugate to any element of any $H_{i}$).  If $g$ is parabolic, this theorem also produces $h\in G$ and an index $i$ such that $g\in H_{i}^{h}$.
Since $G$ is commutation-transitive, it follows that $C(g)$ is a maximal abelian subgroup hence is equal to $H_{i}^{g}$.  Then the conjugates of
the generators of $H_{i}$ form a generating set for $C(g)$.

If $g$ is hyperbolic, then $C(g)$ is infinite cyclic.  Indeed, Theorem~4.3 of \cite{Osi06IJAC} shows that the subgroup
\[
E(g)= \{ h\in G \sst \exists\qs n\in\naturals :\qs h^{-1}g^{n}h=g^{n}\}
\]
has a cyclic subgroup of finite index.  Since $G$ is torsion-free, $E(g)$ must be infinite cyclic (see for example the proof of
Proposition~12 of \cite{MR96}).  Clearly $C(g)\leq E(g)$, hence $C(g)$ is infinite cyclic.

To construct a generator for $C(g)$,
we use the fact that the \emph{root problem} for hyperbolic elements is decidable in $G$: there is an algorithm that,
given $g\in G$ decides whether or not there exist $f\in G$ and $n>1$ such that $g=f^{n}$ (Theorem~5.17 of \cite{Osi06Memoirs}).

If no such $f$ and $n$ exist, then $g$ generates $C(g)$.  Otherwise, we may obtain $f$ and $n$ by exhaustive search.  We now
repeat this procedure with $f$ in place of $g$.  The process must terminate, yielding a generator of $C(g)$, since $C(g)\simeq \integers$.
\end{proof}

%========================================================================================================================
%									Effective description of homomorphisms to \Gamma
%========================================================================================================================

\section{Effective description of homomorphisms to $\Gamma$}
%A major accomplishment in the theory of equations over free groups was the construction of \emph{Hom-diagrams} (also called \emph{Makanin-Razborov diagrams}).
%Such a diagram encodes the set of all (usually infinitely many) solutions to a given system of equations $S(X,A)$ over a free group $F$, or equivalently,
%the set of homomorphisms from a given group (or $F$-group) to $F$.  We give a more precise description of Hom-diagrams in \S \ref{section:HomDiagrams}.

In this section, we describe an algorithm that takes as input a system
of equations $S$ over $\Gamma$ and produces a tree diagram $\mathcal{T}$ that encodes the set
$\Hom_{\Gamma}(\Gamma_{S},\Gamma)$.  When $S$ is a system without coefficients, we interpret $S$ as relators for a finitely presented group
$G=\GPresentation$ and the diagram $\mathcal{T}$ encodes instead the set $\Hom(G,\Gamma)$.

Though the diagram $\mathcal{T}$ will give a finite description of $\Hom_{\Gamma}(\Gamma_{S},\Gamma)$, it is not a `Makanin-Razborov diagram' in the
sense of \cite{Gro05}.  We discuss this further at the end of this section.
%Though our diagram $\mathcal{T}$ will encode
%$\Hom_{\Gamma}(\Gamma_{R(S)},\Gamma)$, it is not a Makanin-Razborov diagram in the usual sense.  We will comment on this at the end of
%the section.

There are two ingredients in this construction: first,
the reduction of the system $S$ over $\Gamma$ to finitely many systems of equations over free groups, and second, the construction of Hom-diagrams (Makanin-Razborov diagrams)
for systems of equations over free groups.

\begin{notation}\label{Not:Lifting}
Let $\overline{\phantom{c}}$ denote the canonical epimorphism $F(Z,A)\rightarrow \Gamma_{S}$.
For a homomorphism $\phi: F(Z,A)\rightarrow K$ we define $\overline{\phi}: \Gamma_{S} \rightarrow K$
by
\[
\big(\overline{w}\big)^{\overline{\phi}} =  w^{\phi},
\]
where any preimage $w$ of $\overline{w}$ may be used.  We will always ensure that $\overline{\phi}$ is a well-defined homomorphism.
For a system $S(Z)=1$ without coefficients, $\overline{\phantom{c}}$ denotes the canonical epimorphism $F(Z)\rightarrow \langle Z\gst S\rangle$
and $\overline{\phi}$ is defined analogously.
\end{notation}

%========================================================================================================================
% 							Canonical representatives
%========================================================================================================================

\subsection{Reduction to systems of equations over free groups}
In \cite{RS95}, the problem of deciding whether or not a system of equations $S$ over a torsion-free hyperbolic group $\Gamma$ has a solution was solved by constructing
\emph{canonical representatives} for certain elements of $\Gamma$. This construction reduced the problem to deciding the existence of solutions in finitely many
systems of equations over free groups, which had been previously solved.  The reduction may also be used to find all solutions to $S$ over $\Gamma$, as described
below.

\begin{lemma}\label{Lem:RipsSela1}
Let $\Gamma=\GammaPresentation$ be a torsion-free $\delta$-hyperbolic group and $\pi : F(A)\rightarrow \Gamma$ the canonical epimorphism.  There
is an algorithm that, given a system $S(Z,A)=1$
of equations over $\Gamma$, produces finitely many systems of
equations
\begin{equation}
S_{1} (X_{1},A)=1,\ldots,S_{n}(X_{n},A)=1
\end{equation}
over $F$ and homomorphisms $\rho_{i}: F(Z,A)\rightarrow F_{S_{i}}$ for $i=1,\ldots,n$
such that
\begin{romanenumerate}
\item for every $F$-homomorphism $\phi : F_{S_{i}}\rightarrow F$,  the map $\overline{\rho_{i}\phi\pi}:\Gamma_{S}\rightarrow \Gamma$ is a $\Gamma$-homomorphism, and
\item for every $\Gamma$-homomorphism $\psi: \Gamma_{S}\rightarrow \Gamma$ there is an integer $i$ and an $F$-homomorphism
$\phi : F_{S_{i}}\rightarrow F(A)$ such that $\overline{\rho_{i}\phi\pi}=\psi$.
\end{romanenumerate}
Further, if $S(Z)=1$ is a system without coefficients, the above holds with $G=\GPresentation$ in place of $\Gamma_{S}$ and `homomorphism' in place of
`$\Gamma$-homomorphism'.
\end{lemma}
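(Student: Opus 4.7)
The plan is to build $S_{1},\ldots,S_{n}$ and $\rho_{1},\ldots,\rho_{n}$ directly from the \emph{canonical representatives} construction of Rips and Sela \cite{RS95}. Fix the equations $s_{1},\ldots,s_{m}$ of $S$. For any $\Gamma$-solution $\psi$ of $S$, every relation $s_{j}^{\psi}=1$ corresponds to a closed loop in the Cayley graph of $\Gamma$ that bounds a van Kampen diagram over $\langle A\,|\,\mathcal{R}\rangle$. The Rips-Sela construction produces, for each $z\in Z$, a canonical representative $\theta_{\psi}(z)\in F(A)$ with $\pi(\theta_{\psi}(z))=\psi(z)$, as well as canonical representatives for certain auxiliary intermediate elements used to triangulate the diagram. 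The key property is that, for each $s_{j}$, the van Kampen diagram admits a triangulation whose edges are labeled by (subwords of) canonical representatives and whose triangles have perimeter bounded by a computable constant $K=K(\delta,\mathcal{R},|s_{j}|)$; moreover the number of triangles is bounded by a computable function of $|s_{j}|$ and $\delta$.

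Because the triangle perimeters and triangle counts are uniformly bounded, the \emph{combinatorial type} of the collection of triangulated discs for $s_{1},\ldots,s_{m}$ --- the face-incidence pattern together with the specific short $F(A)$-words decorating the bounded-length triangle sides --- ranges over a finite, enumerable set $\mathcal{T}_{1},\ldots,\mathcal{T}_{n}$. For each type $\mathcal{T}_{i}$ I introduce a variable $x_{e}$ for every edge $e$ of the triangulation and form $S_{i}(X_{i},A)=1$ as one equation per triangle, setting the product of the three oriented edge variables equal to the particular short $F(A)$-word labeling that triangle in $\mathcal{T}_{i}$. By construction each such triangle word is a bounded-length product of conjugates of $\mathcal{R}$, hence trivial in $\Gamma$. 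Define $\rho_{i}:F(Z,A)\to F_{R(S_{i})}$ by fixing $A$ and sending each $z\in Z$ to the ordered product of edge variables along the subpath of the disc boundary that reads $\theta_{\psi}(z)$ in $\mathcal{T}_{i}$. Then for any $F$-homomorphism $\phi:F_{R(S_{i})}\to F(A)$, the element $s_{j}^{\rho_{i}\phi}$ reads off the boundary of the triangulated disc as an ordered product of triangle-boundary words, each trivial in $\Gamma$; hence $\pi(s_{j}^{\rho_{i}\phi})=1$ for all $j$, so $\overline{\rho_{i}\phi\pi}$ descends to a $\Gamma$-homomorphism on $\Gamma_{R(S)}$, establishing (i).

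For (ii), given any $\Gamma$-homomorphism $\psi:\Gamma_{R(S)}\to\Gamma$, apply Rips-Sela to the induced solution of $S$; this produces triangulated discs for $s_{1},\ldots,s_{m}$ of some combinatorial type $\mathcal{T}_{i}$, whose actual $F(A)$-edge-labels assemble into an $F$-homomorphism $\phi:F_{R(S_{i})}\to F(A)$ solving $S_{i}$. By construction $\rho_{i}\phi$ sends $z\in Z$ to $\theta_{\psi}(z)$ and fixes $A$, so $\pi\circ\rho_{i}\phi$ agrees with $\psi$ on $Z\cup A$, forcing $\overline{\rho_{i}\phi\pi}=\psi$. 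The main obstacle --- and the reason canonical representatives must be used in place of arbitrary lifts --- is the \emph{uniformity} requirement across solutions: the words assigned by $\rho_{i}$ to the variables $z\in Z$ must depend only on $\mathcal{T}_{i}$ and not on the particular $\psi$ that gave rise to a type-$\mathcal{T}_{i}$ triangulation, and the family of types must be finite and effectively enumerable. Guaranteeing both of these properties is exactly what the canonical-representatives theorem of \cite{RS95} provides, and it also forces the enumeration of types to be carried out for all $m$ equations of $S$ simultaneously. The coefficient-free case follows verbatim, replacing $\Gamma_{R(S)}$ by $G=\GPresentation$ and ``$\Gamma$-homomorphism'' by ``homomorphism'' throughout.
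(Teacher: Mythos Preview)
Your proposal has the right high-level strategy---use Rips--Sela canonical representatives to reduce to finitely many systems over $F$---but the specific construction you describe does not work. The core error is the claim that the triangles have ``perimeter bounded by a computable constant $K$'' and that the product of the three edge variables equals a fixed short $F(A)$-word. Rips--Sela do not give this. After first rewriting $S$ in \emph{triangular form} $z_{\sigma(j,1)}z_{\sigma(j,2)}z_{\sigma(j,3)}=1$ (this combinatorial reduction is the only ``triangulation'' in play; no van Kampen diagram is triangulated), the canonical representatives of a solution factor in $F$ as $\theta_{m}(g_{\sigma(j,k)}) = h_{k}^{(j)}\, c_{k}^{(j)}\, (h_{k+1}^{(j)})^{-1}$, where only the $c_{k}^{(j)}$ are short and satisfy $c_{1}^{(j)}c_{2}^{(j)}c_{3}^{(j)}=1$ in $\Gamma$; the $h_{k}^{(j)}$ are unbounded. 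The product of the three representatives around a triangle is therefore $h_{1}^{(j)}\bigl(c_{1}^{(j)}c_{2}^{(j)}c_{3}^{(j)}\bigr)(h_{1}^{(j)})^{-1}$, a word whose $F$-length grows with the solution. Your equation ``product of edge variables $=$ short word in $F$'' is thus not satisfied by the canonical representatives, and the finiteness of your set of combinatorial types---and with it part (ii)---collapses.

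The paper instead takes the unbounded pieces $h_{k}^{(j)}$ as the new variables $x_{k}^{(j)}$. The finitely many systems $S_i$ are indexed by the possible choices of the short constants $c_{k}^{(j)}$ (and the bounded parameter $m\leq L$), and the equations of $S_i$ are \emph{matching constraints}: whenever the same variable $z_s$ occurs in positions $(j,k)$ and $(j',k')$, one imposes
\[
x_{k}^{(j)}c_{k}^{(j)}\bigl(x_{k+1}^{(j)}\bigr)^{-1} \;=\; x_{k'}^{(j')}c_{k'}^{(j')}\bigl(x_{k'+1}^{(j')}\bigr)^{-1}
\]
in $F$, and $\rho_i$ sends $z_s$ to $x_{k}^{(j)}c_{k}^{(j)}(x_{k+1}^{(j)})^{-1}$. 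Then (i) holds because $s_j^{\rho_i}$ telescopes to a conjugate of $c_{1}^{(j)}c_{2}^{(j)}c_{3}^{(j)}$, which $\pi$ kills, and (ii) holds by assigning $x_{k}^{(j)}\mapsto h_{k}^{(j)}$. The finiteness comes from the bound on $|c_{k}^{(j)}|$ and on $m$, not from any bound on triangle perimeters.
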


\begin{proof}
The result is an easy corollary of Theorem~4.5 of \cite{RS95}, but we will provide a few details.
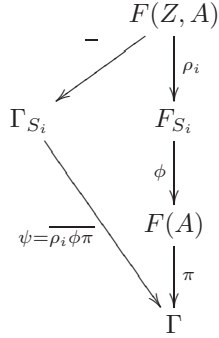
\begin{figure}[htbp]
\begin{center}
\[
\xymatrix{
&  F(Z,A) \ar[ld]_{\overline{\phantom{\phi}}} \ar[d]^{\rho_{i}} \\
\Gamma_{S_{i}}   \ar[ddr]_{\psi=\overline{\rho_{i}\phi\pi}} & F_{S_{i}}  \ar[d]_{\phi}  \\
& F(A) \ar[d]^{\pi} \\
&   \Gamma
}
\]
\caption{Commutative diagram for Lemma~\ref{Lem:RipsSela1}.}
\label{Figure:CanonicalReps}
\end{center}
\end{figure}

The main tool used in \cite{RS95} is the construction of certain representatives in $F$ for elements of $\Gamma$ (Definition~3.10).
For every $m\in\naturals$ and $g\in \Gamma$ a word $\theta_{m}(g)\in F$ is constructed such that
\[
\theta_{m}(g)=g \mbox{ in } \Gamma.
\]
The construction depends only on $m$ and $g$ and may be done algorithmically.

For most $g$ and values of the parameter $m$, the representatives enjoy no special properties: rather, for sets of words $\{g_{i}\}$ satisfying
a system of triangular equations, they satisfy certain cancellation properties (see below) for at least one $m_{0}$
in a bounded interval.  In this case, the representatives $\theta_{m_{0}}(g_{i})$ are often referred to as \emph{canonical representatives} (though this term
is used in \cite{RS95} to refer to all the words $\theta_{m}(g)$).

We may assume that the system $S(Z,A)$, in variables $z_{1},\ldots,z_{l}$, consists of $m$ constant equations and $q-m$ triangular equations, i.e.
\[
S(Z,A) = \braced{z_{\sigma(j,1)}z_{\sigma(j,2)}z_{\sigma(j,3)}=1}{j=1,\ldots,q-m}{z_{s}  =  a_{s}}{s=l-m+1,\ldots,l}
\]
%\begin{eqnarray*}
%z_{\sigma(j,1)}z_{\sigma(j,2)}z_{\sigma(j,3)} & = & 1, \;\;\;\;\; j=1,\ldots,q-m\\
%z_{s} & = & a_{s}, \;\;\;\; s=l-c+1,\ldots,l
%\end{eqnarray*}
where $\sigma(j,k)\in\{1,\ldots,l\}$ and $a_{i}\in\Gamma$.

Let\footnote{The constant of hyperbolicity $\delta$ need not be provided in the input, as it may be computed from a presentation of $\Gamma$ using the results of \cite{EH01}.} $L=q\cdot 2^{5050(\delta+1)^{6}(2|A|)^{2\delta}}$.
Suppose $\psi: F(Z,A)\rightarrow \Gamma$ is a solution of $S(Z,A)$ and denote
\[
\psi(z_{\sigma(j,k)})=g_{\sigma(j,k)}.
\]
Then there exist
$h_{k}^{(j)}, c_{k}^{(j)}\in F(A)$ (for $j=1,\ldots,q-m$ and $k=1,2,3$) such that
\begin{romanenumerate}
\item each $c_{k}^{(j)}$ has length less than\footnote{The bound of $L$ here, and below, is extremely loose.  Somewhat tighter, and more intuitive, bounds are given in \cite{RS95}.} $L$
 (as a word in $F$), \label{RepsCond1}
\item $c_{1}^{(j)}c_{2}^{(j)}c_{3}^{(j)}  =  1$ in $\Gamma$, \label{RepsCond2}
\item there exists $m\leq L$\ such that the canonical representatives satisfy the following equations in $F$:\label{RepsCond3}
\begin{eqnarray}
\theta_{m} (g_{\sigma(j,1)}) & = & h_{1}^{(j)} c_{1}^{(j)} \left(h_{2}^{(j)}\right)^{-1} \label{CanonReps1}\\
\theta_{m} (g_{\sigma(j,2)}) & = & h_{2}^{(j)} c_{2}^{(j)} \left(h_{3}^{(j)}\right)^{-1}\\
\theta_{m} (g_{\sigma(j,3)}) & = & h_{3}^{(j)} c_{3}^{(j)} \left(h_{1}^{(j)}\right)^{-1}.\label{CanonReps3}
\end{eqnarray}
\end{romanenumerate}
In particular, when $\sigma(j,k)=\sigma(j',k')$ (which corresponds to two occurrences in $S$ of the variable $z_{\sigma(j,k)}$) we have
\begin{equation}
h_{k}^{(j)} c_{k}^{(j)} \left(h_{k+1}^{(j)}\right)^{-1} = h_{k'}^{(j')} c_{k'}^{(j')} \left(h_{k'+1}^{(j')}\right)^{-1}.\label{Hequality}
\end{equation}

Consequently, we construct the systems $S(X_{i},A)$ as follows.
For every positive integer $m\leq L$ and every choice of $3(q-m)$ elements $c_{1}^{(j)},c_{2}^{(j)},c_{3}^{(j)}\in F$  ($j=1,\ldots,q-m$)
satisfying (i) and (ii)
we build a system $S(X_{i},A)$
consisting of the equations
\begin{eqnarray}
x_{k}^{(j)}c_{k}^{(j)}\left(x_{k+1}^{(j)}\right)^{-1} & = & x_{k'}^{(j')}c_{k'}^{(j')}\left(x_{k'+1}^{(j')}\right)^{-1} \label{Eqn:SC1}\\
x_{k}^{(j)}c_{k}^{(j)}\left(x_{k+1}^{(j)}\right)^{-1} & = & \theta_{m}(a_s) \label{Eqn:SC2}
\end{eqnarray}
where an equation of type   (\ref{Eqn:SC1}) is included whenever  $\sigma(j,k)=\sigma(j',k')$ and an equation of type (\ref{Eqn:SC2}) is included whenever
$\sigma(j,k)=s\in\{l-m+1,\ldots,l\}$.  To define $\rho_{i}$, set
\[
\rho_i (z_s) = \bracedTwo{x_{k}^{(j)}c_{k}^{(j)}\left(x_{k+1}^{(j)}\right)^{-1},}{1\leq s \leq l-m \mbox{ and } s=\sigma(j,k)}{ \theta_{m}(a_s),}{l-m+1\leq s \leq l}
\]
where for $1\leq s \leq l-m$ any $j,k$ with $\sigma(j,k)=s$ may be used.

If $\psi:F(Z)\rightarrow \Gamma$ is any solution to $S(Z,A)=1$, there is a system $S(X_{i},A)$ such that $\theta_{m}(g_{\sigma(j,k)})$ satisfy
(\ref{CanonReps1})-(\ref{CanonReps3}).  Then the required solution $\phi$ is given by
\[
\phi\big(x_{j}^{(k)}\big) = h_{j}^{(k)}.
\]
Indeed, (iii) implies that $\phi$ is a solution to $S(X_{i},A)=1$.  For $s=\sigma(j,k)\in\{1,\ldots,l-m\}$,
\[
z_{s}^{\rho_{i}\phi} = h_{k}^{(j)} c_{k}^{(j)} \left(h_{k+1}^{(j)}\right)^{-1} = \theta_{m}(g_{\sigma(j,k)})
\]
and similarly for $s\in\{l-m+1,\ldots,l\}$, hence $\psi= \rho_{i}\phi\pi$.

Conversely, for any solution $\phi\big(x_{j}^{(k)}\big)= h_{j}^{(k)}$ of $S(X_{i})=1$ one sees that by (\ref{Eqn:SC1}),
\[
z_{\sigma(j,1)}z_{\sigma(j,2)}z_{\sigma(j,3)} \xmapsto{\rho_{i}\phi} h_{1}^{(j)} c_{1}^{(j)}c_{2}^{(j)}c_{3}^{(j)} \big(h_{1}^{(j)}\big)^{-1}
\]
which maps to 1 under $\pi$ by (ii), hence $\rho_{i}\phi\pi$ induces a homomorphism.
\end{proof}

%===================================================================================================================
% 						Hom diagrams
%===================================================================================================================

\subsection{Encoding solutions with the tree $\mathcal{T}$}\label{section:HomDiagrams}
An algorithm is described in \S 5.6 of  \cite{KM05Implicit} which constructs, for a given system of equations $S(X,A)$ over the free group $F$,
a diagram encoding the set of solutions of $S$.  The diagram consists of a directed finite rooted tree $T$ with
the following properties.  Let $G=F_{R(S)}$.
\begin{romanenumerate}
\item Each vertex $v$ of $T$ is labelled by a pair  $(G_{v},Q_{v})$ where $G_{v}$ is an $F$-quotient of $G$ and $Q_{v}$ a finitely generated subgroup of $\mathrm{Aut}_{F}(G_{v})$.
The root $v_0$ is labelled by $(G,1)$ and every leaf is labelled by $(F(Y)\ast F,1)$ where $Y$ is some finite set (called \emph{free variables}).
Each $G_{v}$, except possibly $G_{v_{0}}$, is fully residually $F$.
\item Every (directed) edge $v\rightarrow v'$ is labelled by a proper surjective $F$-homomorphism $\pi(v,v'):G_{v}\rightarrow G_{v'}$.
\item For every $\phi\in\mathrm{Hom}_{F}(G,F)$ there is a path $p=v_0 v_1 \ldots v_k$ where $v_k$ is a leaf labelled by
$(F(Y)\ast F,1)$, elements $\sigma_{i}\in Q_{v_{i}}$, and a $F$-homomorphism
$\phi_{0}: F(Y)\ast F\rightarrow F$ such that
\begin{equation}
\phi = \pi(v_0,v_1) \sigma_1 \pi(v_1,v_2) \sigma_2 \cdots \pi(v_{k-2},v_{k-1})\sigma_{k-1}\pi(v_{k-1},v_{k})\phi_{0}.
\end{equation}
\end{romanenumerate}
The algorithm gives for each $G_{v}$ a finite presentation $\langle A_{v}|\mathcal{R}_{v}\rangle$, and for each $Q_{v}$ a finite list of
generators in the form of functions $A_{v}\rightarrow (A_{v}\cup A_{v}^{-1})^{*}$.  Note that the choices for $\phi_{0}$ are
exactly parametrized by the set of functions from $Y$ to $F$.

Let $S(Z,A)=1$ be a system of equations over $\Gamma$.  We will construct a diagram $\mathcal{T}$ to encode the set of solutions of $S(Z,A)=1$, as follows.

Apply Lemma~\ref{Lem:RipsSela1} to construct the systems $S_{1}(X_{1},A),\ldots,S_{n}(X_{n},A)$.
Create a root vertex $v_{0}$ labelled by $F(Z,A)$.  For each of the systems $S_{i}(X_{i},A)$, let $T_{i}$
be the tree constructed above. Build an edge from $v_{0}$ to the root of $T_{i}$ labelled by the homomorphism $\rho_{i} \pi_{S_{i}}$, where $\pi_{S_{i}}:F(X_{i},A)\rightarrow F_{R(S_{i})}$
is the canonical projection.
For each leaf $v$ of $T_{i}$, labelled by $F(Y)\ast F$, build a new vertex $w$ labelled by $F(Y)\ast\Gamma$ and an edge $v\rightarrow w$ labelled by the homomorphism
$\pi_{Y}:F(Y)\ast F\rightarrow F(Y)\ast \Gamma$ which is induced from $\pi:F\rightarrow \Gamma$ by acting as the identity on $F(Y)$.

Define a \emph{branch} $b$ of $\mathcal{T}$ to be a path $b=v_{0} v_{1} \ldots v_{k}$ from the root $v_{0}$ to a leaf $v_{k}$.
Let $v_{1}$ be labelled by $F_{R(S_{i})}$ and $v_{k}$ by $F(Y)\ast \Gamma$.
We associate to $b$ the set $\Phi_{b}$ consisting of all homomorphisms $F(Z)\rightarrow \Gamma$ of the form
\begin{equation}
\rho_{i}\pi_{S_{i}}\pi(v_{1},v_{2}) \sigma_{2} \cdots \pi(v_{k-2},v_{k-1})\sigma_{k-1}\pi(v_{k-1},v_{k})\pi_{Y}\phi
\end{equation}
where $\sigma_{j}\in Q_{v_{j}}$ and $\phi\in \mathrm{Hom}_{\Gamma}(F(Y)\ast\Gamma,\Gamma)$.
Since $\mathrm{Hom}_{\Gamma}(F(Y)\ast\Gamma,\Gamma)$ is in bijective correspondence with the set of functions $\Gamma^{Y}$, all elements of $\Phi_{b}$
can be effectively constructed.  We have obtained the following theorem.

\begin{theorem} \label{Thm:EffectiveSolutions}
There is an algorithm that, given a system $S(Z,A)=1$ of equations over $\Gamma$, produces a diagram encoding its set of solutions.  Specifically,
\[
\Hom(\Gamma_{R(S)},\Gamma) = \{ \overline{\phi} \sst \phi\in\Phi_{b}\cs \mbox{$b$ is a branch of $\mathcal{T}$}\}
\]
where $\mathcal{T}$ is the diagram described above.  When the system is coefficient-free, then the diagram encodes $\Hom(G, \Gamma)$ where
$G=\GPresentation$.
\end{theorem}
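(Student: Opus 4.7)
The construction of $\mathcal{T}$ and the sets $\Phi_{b}$ is already given explicitly in the discussion preceding the statement, so the task reduces to checking three points: that $\mathcal{T}$ can be built algorithmically, that every $\overline{\phi}$ with $\phi\in\Phi_{b}$ really is a well-defined $\Gamma$-homomorphism $\Gamma_{R(S)}\to\Gamma$, and that every $\Gamma$-homomorphism arises in this way. Effectiveness is immediate from the two main ingredients: Lemma~\ref{Lem:RipsSela1} algorithmically produces the systems $S_{1},\ldots,S_{n}$ and the maps $\rho_{i}$, and the algorithm of \cite{KM05Implicit} produces each tree $T_{i}$ together with finite presentations $\langle A_{v}\gst \mathcal{R}_{v}\rangle$ of the vertex groups and explicit generator lists for the automorphism groups $Q_{v}$; attaching the root $v_{0}$ and the new leaf vertices $w$ labelled $F(Y)\ast\Gamma$ is then finite bookkeeping, and an individual element of a given $\Phi_{b}$ is specified by a tuple in $\prod_{j} Q_{v_{j}}$ together with a function $Y\to\Gamma$.

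For the ``$\supseteq$'' inclusion, fix a branch $b$ and $\phi\in\Phi_{b}$, so $\phi$ factors as $\rho_{i}\pi_{S_{i}}$ followed by a sequence of edge maps and automorphisms along $T_{i}$, the edge $\pi_{Y}$, and a $\Gamma$-homomorphism $\phi':F(Y)\ast\Gamma\to\Gamma$. Since $\phi'$ is determined by its values on $Y$, we may choose $F$-preimages under $\pi$ of each $\phi'(y)$ to define an $F$-homomorphism $\psi':F(Y)\ast F\to F$ satisfying $\psi'\pi=\pi_{Y}\phi'$; collecting the free-group portion of the branch then yields an $F$-homomorphism $\psi_{i}:F_{R(S_{i})}\to F$ with $\phi=\rho_{i}\psi_{i}\pi$. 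Lemma~\ref{Lem:RipsSela1}(i) now guarantees that $\overline{\phi}$ is a well-defined $\Gamma$-homomorphism.

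For the ``$\subseteq$'' inclusion, let $\psi:\Gamma_{R(S)}\to\Gamma$ be any $\Gamma$-homomorphism. Lemma~\ref{Lem:RipsSela1}(ii) supplies an index $i$ and an $F$-homomorphism $\phi:F_{R(S_{i})}\to F$ with $\overline{\rho_{i}\phi\pi}=\psi$, and the defining property of $T_{i}$ factors $\phi$ along some branch of $T_{i}$ using automorphisms $\sigma_{j}\in Q_{v_{j}}$ and a terminal $F$-homomorphism $\phi_{0}:F(Y)\ast F\to F$. The unique $\Gamma$-homomorphism $\tilde\phi_{0}:F(Y)\ast\Gamma\to\Gamma$ agreeing with $\phi_{0}\pi$ on $Y$ then satisfies $\pi_{Y}\tilde\phi_{0}=\phi_{0}\pi$, and the element of $\Phi_{b}$ for the extended branch through the leaf $w$ above the chosen branch of $T_{i}$ with this choice of $\tilde\phi_{0}$ maps under the bar operation to $\psi$. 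The coefficient-free case is handled verbatim by the last sentence of Lemma~\ref{Lem:RipsSela1}, using $G=\GPresentation$ in place of $\Gamma_{R(S)}$.

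I do not anticipate any serious obstacle: the substantive work—the canonical-representatives reduction of \cite{RS95} and the free-group Makanin--Razborov tree of \cite{KM05Implicit}—is already in place. The only delicate point is the reconciliation of the free-group leaf $F(Y)\ast F$ with the $\Gamma$-group leaf $F(Y)\ast\Gamma$: the edge $\pi_{Y}$ is precisely what permits arbitrary choices of $\phi(y)\in\Gamma$, rather than only those $F$-preimage choices that happen to be selected in the proof of Lemma~\ref{Lem:RipsSela1}, to be realised on the nose.
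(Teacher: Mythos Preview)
Your proposal is correct and follows the same approach as the paper: the paper treats the theorem as an immediate consequence of the preceding construction (it simply writes ``We have obtained the following theorem'' before the statement), and what you have written is exactly the verification one would supply if asked to spell this out, invoking Lemma~\ref{Lem:RipsSela1}(i) and (ii) for the two inclusions and lifting/descending along $\pi_{Y}$ at the leaves. There is nothing to add.
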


Note that in the diagram $\mathcal{T}$, the groups $G_{v}$ appearing at vertices are not quotients of coordinate group $\Gamma_{R(S)}$ and
that to obtain a homomorphism from $\Gamma_{R(S)}$ to $\Gamma$ one must compose maps along a complete path ending at a leaf of $\mathcal{T}$.  In \cite{Gro05} it
is shown that for any toral relatively hyperbolic group there exist Hom-diagrams with the property that every group $G_{v}$ is a quotient of
$\Gamma_{R(S)}$ and that every edge map $\pi(v,v')$ is a
proper surjective homomorphism.  However, we are not aware of a algorithm for constructing these
diagrams.

%=============================================================================================================================
%					Embedding into NTQ groups
%=============================================================================================================================

\section{Embedding into extensions of centralizers}

The proof given in \cite{KM09} that any fully residually $\Gamma$ group $G$ embeds into extensions of centralizers of $\Gamma$ involves two steps: first, $G$ is shown to
embed into the coordinate groups of an \emph{NTQ system} (see \S \ref{subsection:NTQ}), and second, such groups are shown to embed into extensions of centralizers of $\Gamma$.
The first step of this construction relies on the following theorem (Theorem 1.1 of \cite{Gro05}):
there exists a finite collection $\{ L_{i}\}$ of proper quotients of $G$ such that any homomorphism from
$G$  to $\Gamma$ factors through one of the $L_{i}$ (up to a certain equivalence).  Algorithmic construction of the set $\{ L_{i}\}$ is not given, nor are we aware of
an algorithm for constructing it.

Our construction avoids this first step while making use of the second step.  The outline of our procedure is as follows.
We construct the diagram $\mathcal{T}$ from \S 2 so that the groups in the first layer are coordinate groups over $F$ of NTQ systems \cite{KM98b}.
Regarding each $F$-NTQ system as a system of equations over $\Gamma$, we
obtain a collection of groups through which every homomorphism from $G$ to $\Gamma$ factors.  Doing so introduces some degeneracy in the
NTQ structure, which prevents these groups from embedding into centralizer extensions of $\Gamma$.  We compensate by eliminating the degenerate parts to
obtain `embeddable' $\Gamma$-NTQ groups.  Since $G$ is discriminated by $\Gamma$, it
must embed in at least one of these $\Gamma$-NTQ groups.  Each $\Gamma$-NTQ group then embeds into an
extension of centralizers of $\Gamma$, using techniques and results from the second step of the \cite{KM09} proof mentioned above.
%use canonical representatives and results from \cite{KM98b} regarding equations over free groups to construct a collection of NTQ groups $F_{R(S_{i})}$ and
%maps from the generating set of $G$ to each $F_{R(S_{i})}$.
%By including relators of $\Gamma$ in each system $T_{j}$, and forming a suitable quotient
%Regarding each NTQ system $T$ as a system of equations over $\Gamma$ rather than $F$, at least one of the maps $F(Z)\rightarrow F_{R(T_{j})$ induces
%an embedding $G\hookrightarrow \Gamma_{R(S_{i})}$.  We then construct an embedding of the coordinate group $\Gamma_{R(S_{i})}$ into an
%extension of centralizers of $\Gamma$, using techniques and results from the second step of the \cite{KM09} proof mentioned above.

%========================================================================================================================
%							Quadratic equations
%========================================================================================================================

\subsection{Quadratic equations and NTQ systems}\label{subsection:NTQ}

\subsubsection*{Quadratic equations}

An equation $s\in G[X]$ over a group $G$ is said to be (strictly) \emph{quadratic} if every variable appearing in $s$ appears at most (exactly) twice, and a system
of equations $S(X)\subset G[X]$ is (strictly) quadratic if every variable that appears in $S$ appears at most (exactly) twice.  Here we count both $x$ and $x^{-1}$ as
an appearance of $x$.
Constructing NTQ systems involves considerable analysis of quadratic equations, and is aided by considering certain standard forms.

\begin{definition}
A \emph{standard quadratic equation} over a group $G$ is an equation of one of the following forms, where $c_{i}$ and $d$ are all nontrivial
elements of $G$:
\begin{eqnarray}
\prod_{i=1}^{n}[x_{i},y_{i}] & = & 1, \;\;\; n \geq 1; \label{Eqn:st1}\\
\prod_{i=1}^{n}[x_{i},y_{i}] \prod_{i=1}^{m}c_{i}^{ z_{i}} d & = & 1,\;\;\;
n,m\geq 0, n+m \geq 1 ; \label{Eqn:st2}\\
\prod_{i=1}^{n}x_{i}^2 & = & 1, \;\;\; n \geq 1; \label{Eqn:st3}\\
\prod_{i=1}^{n}x_{i}^2 \prod_{i=1}^{m}c_{i}^{ z_{i}} d & = & 1, \;\;\; n,m
\geq 0, n+m \geq 1.\label{Eqn:st4}
\end{eqnarray}
The left-hand sides of the above equations are called the \emph{standard quadratic words}.
\end{definition}
The following result allows us to assume that quadratic equations always appear in standard form.
\begin{lemma}
Let $s(X)\in G[X]$ be a strictly quadratic word over a group $G$. Then there is
a $G$-automorphism $\phi$ such that  $s^{\phi}$ is a
standard quadratic word over $G$.
\end{lemma}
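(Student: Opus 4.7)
The plan is to mimic the classical combinatorial proof of the surface classification theorem, keeping track of the constants from $G$. I would proceed by induction on the number $n$ of variables appearing in $s$, with a secondary induction on the total length of $s$ as a word in variables and constants. The elementary $G$-automorphisms I will use are the Nielsen-type substitutions of the form $x \mapsto x u$ or $x \mapsto u^{-1} x$ where $u$ is a word in the other variables and constants, together with permutations of the variable labels; together with cyclic conjugation of $s$ (which replaces $s$ by a $G$-conjugate and does not affect whether $s=1$ is solvable). All constants in $s$ will be collected into conjugates $z^{-1}cz$ and a single trailing word $d$.

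First I would establish the orientable/non-orientable dichotomy. Call a variable $x$ of $s$ \emph{orientable} if $x$ and $x^{-1}$ both occur, and \emph{non-orientable} if both occurrences have the same exponent. If $s$ contains at least one non-orientable variable $x$, then by an automorphism of the form $x \mapsto u^{-1} x$ (where $u$ is the word between the two occurrences of $x$) I can bring the two occurrences adjacent, producing a square $x^2$. An inductive argument then converts every orientable pair $[y,z]$ into two additional squares, via the well-known identity that a commutator on a non-orientable surface is equivalent to two crosscaps. Repeating, $s$ can be brought into the form $\prod_{i=1}^n x_i^2 \cdot w$, where $w$ contains no variables except those inside conjugates. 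If every variable is orientable, then a parallel sequence of substitutions pushes the variables into commutators and produces $\prod_{i=1}^n [x_i,y_i]\cdot w$.

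Next I would deal with the constants inside $w$. After cyclic conjugation, each remaining variable $z_i$ occurs exactly twice (by the strict quadratic hypothesis), and after the reductions above the only way for a variable to survive is as a conjugating letter around a constant. A straightforward substitution of the form $z_i \mapsto z_i v$ (with $v$ built from constants and already-placed conjugating letters) brings the word into the shape $\prod z_i^{-1} c_i z_i \cdot d$, where $d$ is a product of constants. If some $c_i$ equals $1$ in $G$, I delete that factor and the corresponding variable $z_i$ becomes free, so I may discard it; after finitely many deletions all $c_i$ are nontrivial. The same holds for $d$: if $d=1$ I simply drop it, landing in form \eqref{Eqn:st1} or \eqref{Eqn:st3}, and otherwise in form \eqref{Eqn:st2} or \eqref{Eqn:st4}.

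The main obstacle is bookkeeping: each substitution $x \mapsto xu$ alters not only the occurrences of $x$ but also modifies the surrounding word structure, and one must verify that the total length strictly decreases (or that a suitable well-founded complexity measure does), so that the induction closes. I would use the standard complexity measure of Lyndon--Schupp style quadratic-word reductions, namely the lexicographic pair (number of variables, number of constant letters outside of the $z_i^{-1}c_iz_i$ pattern), and check that each elementary step decreases it. The verification that the dichotomy orientable/non-orientable is preserved under these moves follows from the fact that the moves only substitute a variable with a product involving distinct variables, so they do not merge a $y$ and $y^{-1}$ pair with a repeated $x$ pair.
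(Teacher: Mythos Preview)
Your plan is essentially the classical Lyndon--Schupp reduction (\S I.7 of \cite{LS77}), which is exactly what the paper cites as its entire proof; you are simply writing out the argument the paper defers to that reference. Two minor technical caveats for a full write-up: cyclic conjugation of $s$ is not realized by a $G$-automorphism of $G[X]$ (inner automorphisms do not fix $G$ pointwise), and you cannot ``discard'' a variable via an automorphism---both points are handled in Lyndon--Schupp by using only Nielsen-type substitutions and by noting that the variable count is an automorphism invariant.
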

\begin{proof}
Follows easily from \S I.7 of \cite{LS77}.
\end{proof}

To each quadratic equation we associate a punctured surface. To (\ref{Eqn:st1}) we associate the orientable surface of genus $n$ and zero punctures, to (\ref{Eqn:st2})
the orientable surface of genus $n$ with $m+1$ punctures, to (\ref{Eqn:st3}) the non-orientable surface of genus $n$, and to (\ref{Eqn:st4}) the non-orientable surface of
genus $n$ with $m+1$ punctures.   For a standard quadratic equation $S$, denote by $\chi(S)$ the
Euler characteristic of the corresponding surface.

The words $[x_{i}, y_{i}]$, $x_{i}^{2}$, $c_{i}^{z_{i}}$, and $d$ are called \emph{atoms}.
Each atom of the form $[x_{i},y_{i}]$ contributes $-2$ to the Euler characteristic of $S$ while
$x_{i}^{2}$ and $c_{i}^{z_{i}}$ (as well as $d$) each contribute $-1$.
The standard quadratic equations with $\chi(S)>-2$ will appear as exceptional cases in our study.  We record them here for convenience:
\begin{description}
\item [Genus 0 forms:] $c^{z}d$, $c_{1}^{z_{1}} c_{2}^{z_{2}}d$
\item [Orientable forms:] $[x,y]$, $[x,y]d$
\item [Non-orientable forms:] $x^{2}$, $x^{2}d$, $x^{2}c^{z}d$, $x^{2}y^{2}$, $x^{2}y^{2}d$, $x^{2}y^{2}z^{2}$
\end{description}

\comment{
Quadratic words  of the form $ [x,y]$, $x^{2}$, and  $z^{-1}cz$
where $c \in G$, are called \emph{atomic quadratic words} or simply \emph{atoms}.  An atom $[x,y]$ contributes $-2$ to the Euler characteristic of $S$ while
$x^{2}$ and $z^{-1} c z$ (as well as $d$) each contribute $-1$.

A standard quadratic equation $S = 1$  over $G$  has the form
\[
r_{1}  r_{2} \ldots r_{k} d  = 1,
\]
where $r_{i}$ are atoms and $d \in G$.  We classify solutions to quadratic equations based on the extent to which the images
of the atoms commute, as follows.

\begin{definition}  Let $S = 1$ be a standard quadratic equation written in the
atomic form
 $r_{1}r_{2}\ldots r_{k}d = 1 $ with $k \geq 2$.  A solution $\phi : G_{R(S)}
\rightarrow G$
 of $S = 1$  is called
 \begin{romanenumerate}
 \item \emph{degenerate}, if $r_{i}^{\phi} = 1$ for some $i$, and
 \emph{non-degenerate} otherwise;
 \item  \emph{commutative}, if $[r_{i}^{\phi},r_{i+1}^{\phi}]=1$ for all
$i=1,\ldots ,k- 1,$  and  \emph{non-commutative} otherwise;
 \item in \emph{general position}, if $[r_{i}^{\phi},r_{i+1}^{\phi}] \neq 1$ for all
$i=1,\ldots ,k-1,$.
 \end{romanenumerate}
 %A solution to a system $S=1$ of standard quadratic equations is likewise degenerate (resp. non-degenerate, general position) if for every equation $s=1$ from $S=1$
 %the solution is degenerate (resp. non-degenerate, general position).
 \end{definition}

When the group $G$ is commutation transitive, a commutative solution satisfies $[r_{i}^{\phi},r_{j}^{\phi}]=1$ for all $i,j$.  We will only be
interested in the case when $G$ is toral relatively hyperbolic, hence commutation transitive\footnote{Toral relatively hyperbolic groups are CSA, hence commutation
transitive.  See \cite{KM09} or \cite{Gro09}.}.  In this case, solutions also
have the following important property.

\begin{lemma}\label{Lem:GenPosOrAllComm}[\cite{KM98a}, Proposition~3]
Let $S\in G[X]$ be a standard quadratic equation over a toral relatively hyperbolic group $G$ such that $S$ has at least two atoms and such that $S=1$ has a solution in $G$.  Then either
\begin{arabicenumerate}
\item  $S$ has a solution in general position, or
\item  every solution of $S$ is commutative.
\end{arabicenumerate}
\end{lemma}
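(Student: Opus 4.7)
The plan is to prove the dichotomy by showing that any non-commutative solution can be modified into a general-position one via Dehn twist automorphisms of $G_{R(S)}$, and to derive the algorithm from the decidability of the existential theory of $G$.

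For the dichotomy, suppose $\phi$ is a solution with $[r_{i_0}^\phi, r_{i_0+1}^\phi] \neq 1$ for some $i_0$. The plan is to modify $\phi$, one index at a time, so that every consecutive commutator becomes non-trivial. To the standard quadratic equation $S = r_1 r_2 \cdots r_k d$ is associated a punctured surface $\Sigma$, and for each $j$ with $1 \leq j \leq k-1$ there is a simple closed curve $\gamma_j$ on $\Sigma$ separating $r_1,\ldots,r_j$ from $r_{j+1},\ldots,r_k$. In the fundamental group, $\gamma_j$ represents the element $u_j = r_1 \cdots r_j$, and the Dehn twist along $\gamma_j$ extends to a $G$-automorphism $\tau_j$ of $G_{R(S)}$ that conjugates each $r_i$ with $i>j$ by a power of $u_j$ and fixes the remaining atoms. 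Composing $\phi$ with $\tau_j^n$ therefore turns the $j$-th consecutive commutator into $[r_j^\phi,\; u_j^{n\phi}\, r_{j+1}^\phi\, u_j^{-n\phi}]$. Since toral relatively hyperbolic groups are CSA, if $u_j^\phi$ does not commute with $r_{j+1}^\phi$ then this commutator is non-trivial for all sufficiently large $n$; moreover, a non-trivial commutator $[a,b]$ can be annihilated by replacing $b$ with $u^n b u^{-n}$ only for exceptional $u$ and $n$, because the equation $[a, u^n b u^{-n}]=1$ forces $u^n b u^{-n}$ into the centralizer of $a$. Iterating over all $j$ with $[r_j^\phi, r_{j+1}^\phi] = 1$ (and inserting, when necessary, a preliminary auxiliary twist to break the commutation $[u_j^\phi, r_{j+1}^\phi]=1$ using non-commuting elements supplied by Lemma~\ref{Lem:AlgorithmsRelativelyHyperbolic}) produces the required general-position solution.

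For the algorithm, the existence of a general-position solution is expressible by the existential sentence
\[
\exists X \; \Bigl( S(X) = 1 \;\wedge\; \bigwedge_{i=1}^{k-1} [r_i(X),\, r_{i+1}(X)] \neq 1 \Bigr)
\]
over $G$, whose truth can be decided by any algorithm solving the existential theory of a toral relatively hyperbolic group. If it holds, case (1) applies and a witnessing solution can be extracted. If it fails, the dichotomy proved above forces case (2). The main obstacle will be the Dehn-twist construction itself: ensuring that successive twists applied to fix different trivial commutators do not restore previously broken ones. This requires careful bookkeeping of which elements remain non-commuting after each twist, together with a systematic use of the CSA property to argue that the exceptional values of $n$ that would annihilate an already-broken commutator can be avoided.
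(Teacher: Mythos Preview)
Your approach is essentially the same as the paper's, but with different weight given to the two halves. For the algorithm, the paper checks the $k-1$ sentences $\mathcal{S}_i:\exists X\,(S=1 \wedge [r_i,r_{i+1}]\neq 1)$ separately and concludes that all solutions are commutative exactly when every $\mathcal{S}_i$ fails; you instead check the single conjunctive sentence asserting general position directly. Both are existential sentences over $G$ and both invoke the decidability of the existential theory of toral relatively hyperbolic groups, so the difference is cosmetic.

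For the dichotomy, the paper does not reprove anything: it simply cites Proposition~3 of \cite{KM98a}, valid for all CSA groups. Your Dehn-twist sketch is in the spirit of that reference and is the right mechanism, but as written it is not a proof. Two points deserve care. First, your description of the twist $\tau_j$ is slightly off: conjugating the atoms with index $>j$ by $u_j$ does not preserve the relator $r_1\cdots r_k d$ when $d\neq 1$; one must conjugate the block with index $\leq j$ (or equivalently post-compose with an inner automorphism), and then track the induced effect on each consecutive commutator accordingly. Second, the assertion that $[r_j^\phi,\,u_j^{n\phi} r_{j+1}^\phi u_j^{-n\phi}]\neq 1$ for all large $n$ whenever $[u_j^\phi,r_{j+1}^\phi]\neq 1$ is not immediate from CSA alone; the actual argument in \cite{KM98a} uses malnormality of maximal abelian subgroups to show that the ``bad'' values of $n$ are bounded, and combines this with an induction that propagates non-commutativity outward from the index $i_0$. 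The ``preliminary auxiliary twist'' you allude to is exactly where that work happens, and it cannot be waved away; Lemma~\ref{Lem:AlgorithmsRelativelyHyperbolic} is not what supplies the missing step (it is an algorithmic statement, whereas the dichotomy is purely existential). If you want a self-contained proof rather than a citation, you should either reproduce the induction from \cite{KM98a} in full or state precisely which CSA lemma controls the exceptional $n$.
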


} % end comment

\subsubsection*{NTQ systems}

Let $G$ be a group generated by $A$ and let $S(X,A)$ be a system of equations.  The system $S(X,A)$ is called
\emph{triangular quasi-quadratic} (TQ) if it can be partitioned into subsystems
\begin{eqnarray*}
S_{1}(X_{1}, X_{2}, \ldots, X_{n},A) & = & 1, \\
S_{2}(X_{2}, \ldots, X_{n},A) &  = & 1,\\
& \ldots & \\
S_{n}(X_{n},A) & = & 1
\end{eqnarray*}
where $\{X_{1},X_{2},\ldots,X_{n}\}$ is a partition of $X$, and such that for each $i$ one of the following holds:
\begin{Romanenumerate}
\item $S_{i}$ is quadratic in variables $X_{i}$; \label{NTQ1}
\item $S_{i} = \{[x,y]=1\cs [x,u]=1\sst x\cs y\in X_{i}\cs u\in U \}$ where $U\subset F(X_{i+1},\ldots, X_{n},A)$; \label{NTQ2}
\item $S_i = \{[x,y]=1\sst x\cs y\in X_i \}$; \label{NTQ3}
\item $S_i$ is empty. \label{NTQ4}
\end{Romanenumerate}

For any quadratic system $S$ over $G$ one can, by eliminating linear variables,
find a strictly quadratic system $S'$ over $G$ such that every variable occurs in exactly
one equation and $G_{S}\simeq G_{S'}$.  Consequently,  we may assume that every
system $S_{i}$ of $S$ that has the form (\ref{NTQ1}) consists of a single quadratic equation in standard form.
The number $n$ is called the \emph{depth} of the system.

For each $i$, define the coordinate group
\[
G_{i}=G[X_{i},\ldots,X_{n}] / R_{G}(S_{i},\ldots,S_{n})
\]
and set $G_{n+1}=G$. We interpret $S_{i}$ as a subset of $G_{i+1}*F(X_{i})$, i.e. letters from $X_{i}$ are considered variables and letters from $X_{i+1}\cup\ldots\cup X_{n}\cup A$ are considered as constants from $G_{i}$.
One may check that
for every $i=1,\ldots,n$,
\begin{equation}
G_{i} \simeq G_{i+1}[X_{i}] / R_{G_{i+1}}(S_{i}).
\end{equation}
This isomorphism in fact holds for any system of equations that can be partitioned in triangular form.

The system is called \emph{non-degenerate triangular quasi-quadratic} (NTQ) if for every $i$ the system $S_{i}(X_{i}, \ldots, X_{n},A)$ has a solution in
$G_{i+1}$ and the set $U$ in (\ref{NTQ2}) generates a centralizer in $G_{i+1}$.

\begin{definition}
A group $H$ is called a \emph{$G$-NTQ group} if there is a NTQ system $S$ over $G$ such that $H\simeq G_{R(S)}$.
\end{definition}

%and \emph{proper} if for every $i$
%\[
%R_{G}(S_{i},\ldots,S_{n}) = \nclofin{S_{i}}{G_{i-1}[X_{i}]}.
%\]

NTQ groups over free groups played a central role in the solution to Tarski's problems by Kharlampovich-Miasnikov and Sela.  In Sela's work, they are called
\emph{$\omega$-residually free towers} \cite{Sel01}.  While the present work concerns $\Gamma$-NTQ groups, we will study them as quotients
of certain $F$-NTQ groups.

\subsubsection*{Properties of $\mathbf{F}$-NTQ and $\mathbf{\Gamma}$-NTQ groups}

All of the TQ systems we will study satisfy $R_{G_{i+1}}(S_{i}) = \nclofin{S_{i}}{G_{i+1}[X_{i}]}$, hence $G_{i}$ will be presented by
\begin{equation}\label{Eqn:Gi}
G_{i} = \langle G_{i+1}, X_{i} \gst S_{i}\rangle
\end{equation}
and $G_{R(S)}$ by
\[
G_{R(S)} = \langle G, X\gst S\rangle.
\]
In this case, $G_{i}$ admits a graph of groups decomposition of one of the following four types, according to the form of $S_{i}$:
\begin{Romanenumerate}
\item as a graph of groups with vertices $v_{1}$, $v_{2}$ where $G_{v_1}=G_{i+1}$ and $G_{v_{2}}$ is the subgroup
$Q=\langle x_{1}, y_{1},\ldots, x_{n}, y_{n}, c_{1}^{z_{1}}, \ldots, c_{m}^{z_{m}}, d\rangle$ or
$Q=\langle x_{1}, \ldots, x_{n}, c_{1}^{z_{1}}, \ldots, c_{m}^{z_{m}}, d\rangle$, in the orientable and non-orientable cases respectively, $c_1,\ldots ,c_m,d\in G_{i+1}$
($Q$ is called a \emph{QH}-subgroup, cf. \cite{KM06} \S 3.8);
\item as a graph of groups with vertices $v_{1}$, $v_{2}$ where $G_{v_1}=G_{i+1}$, $G_{v_{2}}$ is a free abelian group of rank $m$,
and the edge groups generate a maximal abelian subgroup of $G_{v_1}$ (`rank $m$ extension of centralizer');
\item as a free product with a finite rank free abelian group;
\item as a free product with a finitely generated free group.
\end{Romanenumerate}

This structure allows us to use the graph of groups decomposition of $G_{i}$ to derive
properties of NTQ groups inductively.  In particular, we have the following.

\begin{lemma}\label{Lem:PropertiesOfGammaNTQ}
Let $\Gamma=\GammaPresentation$ be a toral relatively hyperbolic group and
$G$ a $\Gamma$-NTQ group such that $R_{G_{i+1}}(S_{i}) = \nclofin{S_{i}}{G_{i+1}[X_{i}]}$ for all $i=1,\ldots,n$.
Then $G$ is toral relatively hyperbolic. %and fully residually $\Gamma$.
\end{lemma}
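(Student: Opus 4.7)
The plan is to prove both statements simultaneously by induction on $i$ descending from $n+1$ to $1$, showing that each group $G_i$ in the NTQ tower is toral relatively hyperbolic and fully residually $\Gamma$. The base case $G_{n+1}=\Gamma$ is immediate by hypothesis. For the inductive step, the assumption $R_{G_{i+1}}(S_i) = \nclofin{S_i}{G_{i+1}}$ gives $G_i \simeq \langle G_{i+1}, X_i \gst S_i \rangle$, so $G_i$ admits one of the four graph-of-groups decompositions (I)--(IV) listed immediately above the statement, according to the form of $S_i$.

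For toral relative hyperbolicity of $G_i$, I would invoke Dahmani-style combination theorems, case by case. In Cases (III) and (IV), $G_i$ is a free product of the toral relatively hyperbolic group $G_{i+1}$ with a finitely generated free abelian or free group, so toral relative hyperbolicity is preserved and the new free (respectively free abelian) factor is added as a parabolic subgroup of the free product. In Case (II), $G_i$ is an extension of a centralizer of $G_{i+1}$, and the new abelian vertex group merely enlarges the abelian parabolic attached to that centralizer, so the parabolic structure remains abelian and $G_i$ remains toral. In Case (I), $G_i$ is an amalgam of $G_{i+1}$ with a surface group along cyclic subgroups generated by the standard-form constants; for the combination theorem to apply these cyclic subgroups must be malnormal in $G_{i+1}$, which follows from the CSA property of $G_{i+1}$ together with the standard reduction that allows us to assume the constants generate their own centralizers.

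For full residuality by $\Gamma$, it suffices by the inductive hypothesis to show that $G_i$ is fully residually $G_{i+1}$, since composing a homomorphism $G_i \to G_{i+1}$ separating a given finite set $F \subset G_i$ with a homomorphism $G_{i+1} \to \Gamma$ separating the image of $F$ yields a homomorphism $G_i \to \Gamma$ separating $F$. Cases (III) and (IV) are standard: the new generators can be mapped to sufficiently large and generic powers of suitable elements of $G_{i+1}$ to separate any prescribed finite set. Case (II) is handled by the Myasnikov--Remeslennikov ``big powers'' technique, sending each extension generator to a high power of the centralizer generator. Case (I) is the main obstacle and is where the bulk of the work in \cite{KM09} lies: one constructs separating homomorphisms $G_i \to G_{i+1}$ by pre-composing the natural retraction onto $G_{i+1}$ with Dehn-twist automorphisms along curves on the surface associated to $S_i$, and then shows that for generic enough twists the resulting homomorphisms separate any prescribed finite subset of $G_i$. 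This adaptation of the Merzlyakov argument requires the existence of non-commutative solutions to $S_i$ over $G_{i+1}$, which in turn reduces to $G_i$ being residually $G_{i+1}$, an equivalent reformulation (via Lemma~\ref{Lem:ResidualNullstellensatz}) of our hypothesis $R_{G_{i+1}}(S_i) = \nclofin{S_i}{G_{i+1}}$.
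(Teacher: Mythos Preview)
Your approach is essentially the same as the paper's: both argue by induction on the height of the NTQ tower and invoke Dahmani's combination theorem case by case for toral relative hyperbolicity. The paper does not prove the fully residually $\Gamma$ statement at all---it simply cites \cite{KM09}---so your sketch of that argument goes beyond what the paper does here (the detailed case analysis you allude to is carried out later, in Lemma~\ref{Lem:Embedding}).

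Two small technical points are worth flagging. First, in Case~(\ref{NTQ1}) you claim the cyclic edge groups $\langle c_i\rangle$ can be taken malnormal in $G_{i+1}$ via ``the standard reduction that allows us to assume the constants generate their own centralizers.'' That reduction is available over free groups because centralizers are cyclic, but in a general toral relatively hyperbolic $G_{i+1}$ a constant $c_i$ may lie in a non-cyclic parabolic subgroup, in which case $\langle c_i\rangle$ cannot be maximal abelian and hence is not malnormal. The paper handles this instead by citing Osin's results to adjoin the cyclic edge groups as \emph{new} parabolic subgroups, so that Dahmani's Theorem~0.1(3),(3$'$) applies directly. Second, in your last paragraph you say the hypothesis $R_{G_{i+1}}(S_i)=\nclofin{S_i}{G_{i+1}}$ is ``an equivalent reformulation (via Lemma~\ref{Lem:ResidualNullstellensatz})'' of $G_i$ being residually $G_{i+1}$: Lemma~\ref{Lem:ResidualNullstellensatz} only gives one direction, the converse coming from the trivial observation that $G_{R(S)}$ is always residually $G$. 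More substantively, residuality alone does not supply the non-commutative solution you need for the Dehn-twist argument in the quadratic case; that is exactly why the paper isolates the notion of a \emph{regular} NTQ system and treats the non-regular quadratic cases separately in Lemma~\ref{Lem:Embedding}.
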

\begin{proof}
%The second statement is proved in \cite{KM09}.
We proceed by induction on the height of the NTQ system.  The base $\Gamma=G_{n}$ is toral relatively hyperbolic.
Now assume that $G_{2}$ is toral relatively hyperoblic.  We will show that $G_{1}$ is toral relatively hyperbolic by
applying Theorem~0.1 of \cite{Dah03} (`Combination theorem') to the four possible decompositions of
$G_{1}$ described above.

Cases (\ref{NTQ4}) and (\ref{NTQ3}) follow from Theorem~0.1 parts (3) and (2), respectively, by amalgamating over the trivial subgroup.  Note that to use Theorem~0.1 (2) we
need the fact that
if $G$ is hyperbolic relative to the collection of subgroups $\mathcal{H}$ then it is also hyperbolic relative to $\mathcal{H}\cup \{1\}$.
Case (\ref{NTQ2}) follows from Theorem~0.1 (2) by amalgamating over $P=\langle U_{i}\rangle$, which is maximal abelian in $G_{2}$.

For case (\ref{NTQ1}), consider first the case when the surface corresponding to the quadratic equation has punctures.  In this case we form $G_{1}$ by amalgamating $G_{2}$
with a free group over a $\integers$ subgroup, followed HNN-extensions over $\integers$ subgroups.  It follows from the results of \cite{Osi06Memoirs} that these $\integers$ subgroups
are maximal parabolic subgroups, hence we may apply Theorem~0.1 (3), (3').
\end{proof}

\begin{remark}\label{Rem:ParabolicsAreComputable}
From the Combination Theorem it follows that $G$ has finitely many maximal non-cyclic abelian subgroups up to conjugation, and we can construct, by induction, the list of them
along with a finite generating set for each.  In the base group $\Gamma$ this is possible using the results of \cite{Dah08}.
\end{remark}

We now recall in more detail the construction from \cite{KM05Implicit} of a Makanin-Razborov diagram encoding all solutions to a system of equations over a free group.
Let $S$ be an $F$-NTQ system.  To each
group $G_{i}$ is associated a set of \emph{canonical automorphisms}, based on the graph of groups decomposition of $G_{i}$.  See  \S 3.14 of \cite{KM06}
for the definition of canonical automorphisms.  All canonical
automorphisms fix $G_{i+1}$ pointwise.

A homomorphism $\pi_{i}:G_{i}\rightarrow G_{i+1}$ is a solution to $S_{i}$ over $G_{i+1}$.  To each sequence $\Pi=(\pi_{1},\dots,\pi_{n})$ of solutions
is associated a set of homomorphisms
\begin{equation}
\Phi(\Pi) = \{ \sigma_{1}\pi_{1}\ldots\sigma_{n}\pi_{n} \; | \; \mbox{$\sigma_{i}$ a canonical automorphism of $G_{i}$}\}
\end{equation}
called the \emph{fundamental sequence} associated with the NTQ system $S(X,A)$ and the sequence $\Pi$.  The following lemma is
an immediate corollary of the definition of canonical automorphism.

\begin{lemma}\label{Lem:QAbelian}
Let $S_{i}$ have the form (\ref{NTQ1}) and let $Q$ be the QH-subgroup associated with $S_{i}$.
Then for every solution $\pi_{i}$ of $S_{i}$ and canonical automorphism $\sigma$, if $Q^{\pi_{i}}$ is abelian then $Q^{\sigma\pi_{i}}$ is abelian.
\end{lemma}

The complete set of solutions to the system $S(X,A)$ is described by NTQ groups and fundamental sequences as follows.

\begin{proposition}\cite{KM98b}\label{Prop:FreeNTQ}
There is an algorithm that, given a system of equations $S(X,A)=1$ over a free group, produces
finitely many $F$-NTQ systems
\[
S_{1}(X_{1},A)=1, S_{2}(X_{2},A)=1, \ldots, S_{n}(X_{n},A)=1,
\]
along with, for each system $S_{i}(X_{i},A)$, a tuple of solutions $\Pi_{i}$ and a homomorphism $\mu_{i}: F(X,A)\rightarrow F_{S_{i}}$
such that
for every $F$-homomorphism $\psi:F_{S}\rightarrow F$ there is an index $i$ and a
homomorphism $\phi$ in the fundamental sequence $\Phi(\Pi_{i})$ such that
\[
\psi = \mu_{i}\phi.
\]
\end{proposition}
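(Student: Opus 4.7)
The plan is to apply a version of the Makanin--Razborov elimination process, refined to produce NTQ systems at the leaves of a branching tree. This process constructs a finite rooted tree whose root carries the coordinate group $F_{R(T)}$ and whose edges are labelled by proper $F$-epimorphisms between coordinate groups of derived systems. The leaves will be labelled precisely by the desired $F$-NTQ groups $F_{R(T_i)}$, and the maps $\mu_i$ will be obtained as compositions of edge labels along the unique path from the root to leaf $i$.

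First I would convert the input system $T(X,A)=1$ into a generalized equation $\Omega$ in the sense of Makanin, encoding both $T$ and its combinatorial triangulation. I would then apply the elimination process, which consists of a finite list of elementary transformations (entire transformations, linear elimination of short variables, periodic structure analysis, and splittings of quadratic parts) each producing finitely many children. At each branching step the process tracks a \emph{complexity} of $\Omega$ (essentially a lexicographically ordered pair recording quadratic and non-quadratic parts) which strictly decreases along any infinite branch, giving termination. At the leaves one arrives at generalized equations whose coordinate groups decompose as a sequence of extensions of the form given in cases (\ref{NTQ1})--(\ref{NTQ4}) of the NTQ definition, and this decomposition yields the NTQ system $T_i$ explicitly.

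The completeness statement---that every $\psi:F_{R(T)}\to F$ arises as $\mu_i\phi$ for some $i$ and some $\phi$---would be proved by tracking $\psi$ through the tree. Given $\psi$, one interprets it as a solution of $\Omega$ and follows the elimination process with respect to this fixed solution; at each branching step, exactly one child accommodates the solution, so $\psi$ determines a unique root-to-leaf path. At the chosen leaf one reads off $\phi:F_{R(T_i)}\to F$ from the residual data of $\psi$ after the successive transformations have been inverted on the level of homomorphisms. Commutativity $\psi=\mu_i\phi$ then follows tautologically from the construction of $\mu_i$ as the composition of the transformation maps.

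The hard part is the combination of three things at once: termination of the elimination process, the guarantee that leaves have NTQ structure, and the completeness on the level of solutions. Termination relies on the existence of a strictly decreasing complexity under the transformations, the NTQ property at leaves requires designing the case analysis so that no branch terminates in a ``non-quasi-quadratic'' state, and completeness requires that the branching at every step be genuinely exhaustive with respect to all possible solutions. Effectiveness then follows from the fact that each transformation is explicitly constructive and that solvability of the generalized equation at each intermediate stage is decidable by Makanin's algorithm, so the entire tree together with presentations of each $F_{R(T_i)}$ and the maps $\mu_i$ can be written down.
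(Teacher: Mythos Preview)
The paper does not give its own proof of this proposition: it is stated as a citation to \cite{KM98b} and used as a black box. Your sketch is a reasonable high-level outline of the Makanin--Razborov elimination process as refined by Kharlampovich and Myasnikov in that reference, and it captures the essential structure (generalized equations, a finite branching tree of elementary transformations with decreasing complexity, NTQ groups at the leaves, and solution-tracking for completeness). There is nothing in the paper to compare it against beyond the bare citation, so in that sense your proposal goes well beyond what the paper itself supplies.
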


%==================================================================================================================================
%									Embedding into extensions of centralizers						
%==================================================================================================================================

\subsection{Embedding $\Gamma$-NTQ into extensions of centralizers}
In this section we prove that $\Gamma$-NTQ groups, with a few exceptions, embed into groups obtained from $\Gamma$ by
extensions of centralizers.  When converting $F$-NTQ systems to $\Gamma$-NTQ systems in the next section, we will need to ensure that
the systems are not of the exceptional type.
Our results apply in the more general case when $\Gamma$ is any toral relatively hyperbolic group.

We will need the following lemma in order to construct embeddings inductively.

\begin{lemma}\label{Lem:CanonicalEmbedding}
Let $H\leq G$ be any torsion-free groups and let $S(X,A)$ be a system of equations over $H$.
\begin{romanenumerate}
\item If $S$ has one of the NTQ forms (\ref{NTQ1}), (\ref{NTQ3}), or (\ref{NTQ4}) then
the canonical homomorphism $H_{S} \rightarrow G_{S}$ is an embedding.
\item If $S$ has the form (\ref{NTQ2}), and $U'$ is a generating set for $C_{G}(U)$,
then the canonical homomorphism
\[
H_{S} \rightarrow \langle G, X\gst [x,y]=1\cs [x,u]=1\cs x,y\in X\cs u\in U'\rangle
\]
is an embedding.
\end{romanenumerate}
\end{lemma}
\begin{proof}
Form (\ref{NTQ1}) (when $S$ is a standard quadratic equation) is Proposition~2 of \cite{KM98a}, form (\ref{NTQ2}) ($H_{S}$ is an extension of a centralizer) follows
from the theory of normal forms for HNN-extensions, and forms (\ref{NTQ3}) and (\ref{NTQ4}) ($H_{S}$ is the free product of $H$ with a free group or free abelian group) are obvious.
\end{proof}

We first prove the embedding for NTQ systems of depth one, then extend by induction to systems of arbitrary depth.

\begin{lemma}\label{Lem:NTQEmbedding}
Let $G$ be a non-abelian toral relatively hyperbolic group generated by $B$ and let $S(X,B)$  be an NTQ system of depth one over $G$.
If $S(X,B)$ has the form (\ref{NTQ1}), we assume further that either
\begin{romanenumerate}
\item there exists a solution $\pi$ to $S(X,B)$ over $G$ such that $Q^{\pi}$ is non-abelian, where
$Q$ is the QH-subgroup of $G_{S}$ associated with $S$, or
\item $S$ has one of the forms $[x,y]$, $[x,y]d$, or $[x_{1},y_{1}][x_{2},y_{2}]$.
%\item if $S$ has the form (\ref{NTQ2}), then $U$ generates a centralizer in $G$.
\end{romanenumerate}
Then the group $G_{S}=\langle G, X \, | \, S\rangle$ embeds into a group $G'$ which is an iterated centralizer extension of $G$, and there
is an algorithm to construct the emebedding.
In particular, $G_{S}$ is fully residually $G$ and $R_{G}(S) = \nclofin{S}{G[X]}$.
\end{lemma}
\begin{definition} When an NTQ system $S$ of depth one satisfies conditions (i) and (ii) we will call $S$ \emph{embeddable} over $G$.

An NTQ system $S=S_{1}\cup\ldots\cup S_{n}$ of depth $n$
will be called \emph{embeddable} over $G$ if each $S_{i}$ is embeddable over
$\langle G, X_{i+1},\ldots, X_{n}\gst S_{i+1},\ldots,S_{n}\rangle$.
\end{definition}
\begin{proof}
If $S$ has form (\ref{NTQ2}), then by assumption $U$ generates a centralizer in $G$, hence $\langle G, X\gst S\rangle$ is itself an extension of a centralizer of
$G$ so there is nothing to prove.
We consider the remaining forms (\ref{NTQ1}), (\ref{NTQ3}), and (\ref{NTQ4}).

%------------------------------- Quadratic equation -------------------------------------------------------------

\textbf{Form (\ref{NTQ1}): Quadratic equation}.  Suppose that $S$ is a quadratic equation, which we may assume has one of the standard forms
(\ref{Eqn:st1})--(\ref{Eqn:st4}). Equations of the form $[x,y]$ are a case of form (\ref{NTQ3}), which are covered below.

Most of the cases are covered by Theorem~4.1 of \cite{KM09}, which proves that the required
embedding exists when $S$ has the form $[x,y]d$ or
$[x_{1},y_{1}][x_{2},y_{2}]$, or when $\chi(S)\leq -2$ and $S$ has a `non-commutative' solution over $G$.  A non-commutative solution $\pi$ is one in which
there exist two adjacent atoms of the form $[x_{i}, y_{i}]$, $x_{i}^{2}$, $c_{i}^{z_{i}}$, or $d$ whose images under $\pi$ do not commute.

For orientable equations, it follows from Proposition~4.3 of \cite{KM09} that such a solution always exists.  For non-orientable and genus 0 forms,
we use the assumption that $Q^{\pi}$ is non-abelian for some solution $\pi$.  In the genus 0 case, there must exist $i$ and $j$ such
that the images of $c_{i}^{z_{i}}$ and $c_{j}^{z_{j}}$ do not commute.  Applying commutation-transitivity, we may assume that
$c_{i}^{z_{i}}$ and $c_{j}^{z_{j}}$ are adjacent hence $\pi$
is a non-commutative solution.  Similar reasoning applies to the non-orientable case.

The construction given in Theorem~4.1 of \cite{KM09} is explicit, and the reader may verify that to carry out the
construction effectively it suffices to be able to solve the word problem in $G$ and to have a solution to $S$ in `general position': this is
a solution in which the images of no two adjacent atoms commute.  The existence of such a solution follows from the existence of a non-commutative solution
(see Proposition~3 of \cite{KM98a}), and one may be produced by exhaustive search.

It remains to consider forms with $\chi(S)>-2$.  The forms $x^{2}$, $x^{2}d$, $x^{2}y^{2}$, and $c^{z}d$ need not be considered since the image of $Q$ under any solution
must be abelian.  Forms $[x,y]$ and $[x,y]d$ were covered above. The remaining forms are
$c_{1}^{z_{1}} c_{2}^{z_{2}}$, $x^{2}c^{z}d$, $x^{2}y^{2}$, $x^{2}y^{2}d$, $x^{2} y^{2} z^{2}$.  As discussed above, the solution $\pi$ is non-commutative,
and such a solution may be produced by exhaustive search since it is assumed to exist.

The techniques for embedding the group $\langle G,X\gst S\rangle$
into extensions of centralizers in these cases have been discussed in detail, for the case when $G$ is a limit group, in \S 5 of \cite{KM98a}.
The proofs in our case are similar, so we omit some details.

For the form $x^{2}y^{2}d$, consider the series of extensions of centralizers
\begin{eqnarray*}
G' & = & \langle G, t \gst [C(ab), t] \rangle, \\
G'' & = & \langle G', s \gst [C(atat), s] \rangle, \\
G''' & = & \langle G'', r \gst [C(s^{-1}atst^{-1}b), r] \rangle,
\end{eqnarray*}
and the map $\psi: \langle G, x, y \gst x^{2} y^{2} d \rangle \rightarrow G'''$ given by
\begin{eqnarray*}
x & \rightarrow & (at)^{s}r \\
y & \rightarrow & r^{-1} t^{-1} b.
\end{eqnarray*}
Since $(x^{2} y^{2} d)^{\psi} = 1$, $\psi$ defines a homomorphism.  Using normal forms for elements of HNN-extensions, one can show that
$\psi$ is injective. %(see for example the proofs in \S 5 of \cite{KM98a}).

For the form $x^{2} c^{z} d$, consider the sequence of extensions of centralizers
\begin{eqnarray*}
G' & = & \langle G, t \gst [C(d), t] \rangle, \\
G'' & = & \langle G', s \gst [C(c^{b}), s] \rangle, \\
G''' & = & \langle G'', r \gst [C(c^{bt}), r] \rangle,
\end{eqnarray*}
and the map $\psi: \langle G, x, y \gst x^{2} c^{z} d \rangle \rightarrow G'''$ given by
\begin{eqnarray*}
x & \rightarrow & a^{t} \\
y & \rightarrow & bstr.
\end{eqnarray*}
As in the previous case, $(x^{2} c^{z} d)^{\psi} = 1$ and we may prove using normal forms that $\psi$ is injective.

For the form $x^{2} y^{2} z^{2}$,
find any solution $x\rightarrow a$, $y\rightarrow b$, $z\rightarrow c$ of $S$ in general position.  Consider the series of six
extensions of centralizers
\begin{eqnarray*}
G^{(1)} & = & \langle G, s \gst [s, C(ab)] \rangle, \\
G^{(2)} & = & \langle G^{(1)}, r \gst [r, C(s^{-1}bc)] \rangle, \\
G^{(3)} & = & \langle G^{(2)}, v \gst [v, C(abrs^{-1}bc)] \rangle, \\
G^{(4)} & = & \langle G^{(3)}, t \gst [t, C(vasvas)] \rangle, \\
G^{(5)} & = & \langle G^{(4)}, u \gst [u, C(s^{-1}brs^{-1}br)] \rangle, \\
G^{(6)} & = & \langle G^{(5)}, w \gst [w, C(r^{-1}cv^{-1}r^{-1}cv^{-1})] \rangle,
\end{eqnarray*}
and the map $\psi: \langle G, x, y, z \gst x^{2} y^{2} z^{2} \rangle \rightarrow G^{(6)}$ given by
\begin{eqnarray*}
x & \rightarrow & (vas)^{t} \\
y & \rightarrow & (s^{-1}br)^{u} \\
z & \rightarrow & (r^{-1}cv^{-1})^{w}.
\end{eqnarray*}
As in the previous case, $(x^{2} y^{2} z^{2})^{\psi} = 1$ and we may prove, with a lengthy argument using normal forms, that $\psi$ is injective.

For the form $c_{1}^{z_{1}}c_{2}^{z_{2}}d$, we may follow the proof given in section \S 5 case 2 of \cite{KM98a}.

%-------------------------------  Free product with a free group -----------------------------------------------

\textbf{Form (\ref{NTQ4}): Free product with a free group.}
First consider the case of a single variable, $X=\{x\}$.  We will apply the following fact.
\begin{lemma}[\cite{KM98a}, Lemma 16]\label{Lem:AbelianEmbeds}
Let $H$ be a non-abelian CSA group, $H'=\langle H, t\gst [t, C(u)]\rangle$ an extension of centralizer of $H$, and $v\in H$ an element such that
$[u,v]\neq 1$.  Then for $z=(tvt)^{2}$ the subgroup $\langle G, u^{z}, t^{z}\rangle$ is $H$-isomorphic to the free product of $H$ and a free abelian group of
rank two generated by $u^{z}$ and $t^{z}$.
\end{lemma}
Construct two non-commuting elements $u, v\in G$ and let $G'=\langle G, t\gst [t, C(u)]\rangle$.  By the lemma, $G$ embeds into $G'$ via $x\rightarrow u^{z}$.

When $X=\{x_{1},\ldots,x_{n}\}$, we first embed $G\ast F(x_{1})$ into an extension of centralizer $G'$ as above.  Then by Lemma~\ref{Lem:CanonicalEmbedding},
$G\ast F(x_{1},x_{2})$ embeds canonically in $G'\ast F(x_{2})$.  We may now repeat the construction above and proceed by induction.

% ------------------------------------- Free product with an abelian group ---------------------------------------------------------

\textbf{Form (\ref{NTQ3}): Free product with a free abelian group}.  Suppose $S$ has the form (\ref{NTQ3}).
First, suppose that $|X|=2$, and so
$\langle G, X \gst S\rangle\simeq G \ast \integers^{2}$.
By Lemma~\ref{Lem:AbelianEmbeds}, $G \ast \integers^{2}$ embeds effectively into a centralizer extension of $G$.

If $X=\{x_{1},\ldots, x_{n}\}$ with $n>2$, partition $X$ into $X_{a}=\{x_{3},\ldots,x_{n}\}$ and $X_{b}=\{x_{1},x_{2}\}$ and partition $S$ into
two subsystems
\begin{eqnarray*}
S_{a} & = & \{ [x_{i},x_{j}]=1, [x_{i},u]=1, \sst i,j\in \{3,\ldots, m\}\cs u\in X_{b} \}, \\
S_{b} & = & \{ [x_{1},x_{2}]=1 \}.
\end{eqnarray*}
The system $S_{b}$ has the form (\ref{NTQ3}) with two variables, hence $\langle G, X_{b}\gst S_{b}\rangle$ embeds effectively into an extension of centralizers $G'$
of $G$.  Using Lemma~\ref{Lem:AlgorithmsRelativelyHyperbolic}, construct a finite generating set $U$ for the centralizer of $\{x_{1},x_{2}\}$ in $G'$.
By Lemma~\ref{Lem:CanonicalEmbedding},  $\langle G, X_{b}, X_{a}\gst S_{b}, S_{a}\rangle$ embeds canonically into
$\langle G', X_{a}\gst [x,y]=1, [x,u]=1\cs x,y\in X_{a}\cs u\in U'\rangle$, which is an iterated centralizer extension of $G$.
\end{proof}

Finally, we combine the two previous lemmas to obtain an effective embedding of $\Gamma$-NTQ groups into extensions of centralizers of $\Gamma$.

\begin{corollary}\label{Cor:NTQEmbedding}
Let $S(X,A)$ be an embeddable NTQ system over a toral relatively hyperbolic group $\Gamma$.
Then $\Gamma_{S}=\langle \Gamma, X\gst S\rangle$ embeds into a group obtained from $\Gamma$ by
extensions of centralizers.  In particular, $\Gamma_{S}$ is fully residually $\Gamma$, is toral relatively hyperbolic, and
\[
\Gamma_{S} = \Gamma_{R(S)}.
\]
Further, the embedding may be computed effectively.
\end{corollary}
\begin{proof}
We proceed by induction of the depth of the NTQ system.  For depth 0, there is nothing to prove.
Assume then that for the system $S'=S_{2}\cup\ldots\cup S_{n}$ the group $\Gamma_{S'}$ embeds effectively into a group $G$ obtained from
$\Gamma$ by extensions of centralizers.  Then consider $S_{1}$ as a system of equations over $G\geq \Gamma_{S'}$.

If $S_{1}$ has the form (\ref{NTQ2}), we construct, using Lemma~\ref{Lem:AlgorithmsRelativelyHyperbolic}, a generating set $U'$
for the centralizer of $\langle U\rangle$ in $G$. Then $\langle \Gamma_{S'}, X_{1}\gst S_{1}\rangle$ embeds canonically
into the centralizer extension of $G$ given in Lemma~\ref{Lem:CanonicalEmbedding}. Otherwise, Lemma~\ref{Lem:CanonicalEmbedding} embeds $\langle \Gamma_{S'}, X_{1}\gst S_{1}\rangle$ into
$G_{S_{1}}$, which is further embedding into a centralizer extension of $G$ using Lemma~\ref{Lem:NTQEmbedding}.

It is proved in \cite{KM09} that extensions of centralizers of $\Gamma$ are discriminated by $\Gamma$, hence so are their subgroups, so
$\Gamma_{S}$ is discriminated by $\Gamma$.  It follows that $\Gamma_{S}=\Gamma_{R(S)}$, so Lemma~\ref{Lem:PropertiesOfGammaNTQ}
proves that $\Gamma_{S}$ is toral relatively hyperbolic.
\end{proof}

%--------------------- Converting F-NTQ to \Gamma-NTQ ---------------------------------------------------------

\subsection{Converting $F$-NTQ systems to $\Gamma$-NTQ systems}\label{section:ConvertingNTQ}

Let $S(X,A)$ be an $F$-NTQ system and $\Pi=(\pi_{1},\ldots,\pi_{n})$ a solution sequence.
We may regard $S(X,A)$ as a system of equations over $\Gamma$, but the
conditions for Corollary~\ref{Cor:NTQEmbedding} need not hold due to degeneracy in the NTQ structure.  The goal of this section
is to produce a $\Gamma$-NTQ system $T$ satisfying Corollary~\ref{Cor:NTQEmbedding} and such that each homomorphism
from $\Phi(\Pi)$, after post-composition with the canonical projection $\pi: F\rightarrow \Gamma$, factors through $\Gamma_{T}$.

The system of equations $T$ will be obtained by adding relators to $S$, and we denote by $\gamma:F_{S}\rightarrow \Gamma_{T}$ the
canonical quotient map.  These relators will be consequences of the solutions in
the fundamental sequence (they are in the radical $S$ with respect to this set of homomorphisms), and as such the full set of
homomorphisms in the fundamental sequence will factor through $\Gamma_{T}$, as we will see in Lemma~\ref{Lem:ConvertedNTQ}.
The system $T$ will not be written in NTQ form, but Lemma~\ref{Lem:GammaTisNTQ} will establish that $\Gamma_{T}$ is (isomorphic to) a
$\Gamma$-NTQ group.

We proceed inductively on the depth $n$ of the system $S$.  A depth 0 system has no equations and no variables, so $F_{S}=F$.  Set $T$ to be
the set of relators of $\Gamma$ and obtain $\Gamma_{T}=\Gamma$ and $\gamma=\pi$.

Now assume that for the system $S'=S_{2}\cup\ldots\cup S_{n}$ in variables $X'=X_{2}\cup\ldots\cup X_{n}$ and solution sequence
$\Pi'=(\pi_{2},\ldots,\pi_{n})$ we have constructed
a quotient $\Gamma_{T'}$ of $F_{S'}$, and let $\gamma'$ denote the quotient map.
Note that since Lemma~\ref{Lem:GammaTisNTQ} is proved inductively, we may assume that $\Gamma_{T'}$ is an embeddable $\Gamma$-NTQ
group.  In particular, $\Gamma_{T'}$ is toral relatively hyperbolic.

We form $\Gamma_{T}$ by adding variables $X_{1}$ new relators $T''$.  Denote $T=T'\cup T''$.
In every case, $S_{1}$ will be included in $T''$, ensuring that $\Gamma_{T}$ is a quotient of $F_{S}$. We proceed
based on the NTQ form (\ref{NTQ1})--(\ref{NTQ4}) of $S_{1}$.

\emph{Forms (\ref{NTQ2}), (\ref{NTQ3}), and (\ref{NTQ4}).}
If $S_{1}$ has form (\ref{NTQ3}) or (\ref{NTQ4}), set $T''=\{S_{1}\}$.
If $S_{1}$ has form (\ref{NTQ2}), there are two cases.
 If $u^{\gamma'}=1$ for all $u\in U$, set $T''=\{S_{1}\}$.
Otherwise, let $u_{0}\in U$ with $u_{0}^{\gamma'}\neq 1$.  Using
Lemma~\ref{Lem:AlgorithmsRelativelyHyperbolic}, construct a generating set $U'$ for the centralizer of $u_{0}^{\gamma'}$ in $\Gamma_{T'}$ and let $T''$ consist of
$S_{1}$ and all relators $[x, u']$ where $x\in X_{1}$ and $u'\in U'$.

\emph{Form (\ref{NTQ4}).}
If $S_{1}$ has form (\ref{NTQ1}), we may assume it is a single quadratic equation having one of the standard forms
(\ref{Eqn:st1})--(\ref{Eqn:st4}). Consider the equation $S_{1}^{\gamma'}$ over $\Gamma_{T'}$.
The constants $c_{i}$ and $d$ in the standard form are non-trivial in $F_{S'}$, but their images $c_{i}^{\gamma'}$, $d^{\gamma'}$ may be trivial in $\Gamma_{T'}$.
Form an equation $S_{1}'$ over $\Gamma_{T'}$
by
\begin{romanenumerate}
\item erasing from $S_{1}$ each factor $c_{i}^{z_{i}}$ such that $c_{i}^{\gamma'}=1$, and
\item if $d^{\gamma'}=1$,  erasing $d$ and replacing the rightmost factor of the form $c_{i}^{z_{i}}$ by $c_{i}$.
\end{romanenumerate}
In $\Gamma_{T'}$, the relator $S_{1}$ follows from $S_{1}'$.  Consequently, if we include $S_{1}'$ in $T''$ then $S_{1}$ is redundant
and need not be included.

Let $X_{1}'$ denote the set of variables appearing in $S_{1}'$ and $X_{1}''=X_{1}\setminus X_{1}'$.
Since $\pi_{1}$ is a solution to $S_{1}$ over $F_{S'}$, and $\Gamma_{T'}$ is a quotient of $F_{S'}$,
it follows from the construction of $S_{1}'$ that $\pi_{1}'=\pi_{1}\gamma'$ is a solution to $S_{1}'$ over $\Gamma_{T'}$.

%\begin{figure}[htbp]
%\begin{center}
\[
\xymatrix{
&  F_{S}  \ar[d]^{\pi_{1}} \ar[dl]_{\pi_{1}'} \\
\Gamma_{T'} & F_{S'} \ar[l]^{\gamma'}
}
\]
%\caption{The map $\pi_{1}'=\pi_{1}\gamma'$.}
%\label{Figure:MapPi1Prime}
%\end{center}
%\end{figure}

Let $Q\leq F_{S}$ be the QH-subgroup associated with $S_{1}$, i.e. the group generated by
$W=\{x_{1},y_{1},\ldots,x_{n}, y_{n}, c_{1}^{z_{1}}, \ldots, c_{m}^{z_{m}}, d\}$ in the orientable case
and $W=\{x_{1},\ldots,x_{n}, c_{1}^{z_{1}}, \ldots, c_{m}^{z_{m}}, d\}$ in the non-orientable case.
We check whether or not the image $Q^{\pi_{1}'}$
is an abelian subgroup in $\Gamma_{T'}$, by verifying that the images of all commutators of elements of $W$ are trivial under $\pi_{1}'$.
We proceed based on the form of $S_{1}'$ and whether or not $Q^{\pi_{1}'}$ is abelian.

\emph{Case $\chi(S_{1}')\leq -2$.}
If $Q^{\pi_{1}'}$ is non-abelian, then $T''=\{ S_{1}'\}$.
For any $h\in \Gamma_{T'}$,  let $U(h)$ denote a generating set for the centralizer of $h$ in $\Gamma_{T'}$,
which we may construct using Lemma~\ref{Lem:AlgorithmsRelativelyHyperbolic}.
We denote $N=\{1,\ldots, n\}$, $M=\{1,\ldots,m\}$, $a_{i}=z_{i}^{\pi_{1}'}$,
\[
XY=\{ [x_{i}, y_{j}]\cs [x_{i}, x_{j}]\cs [y_{i}, y_{j}]\sst i,j \in N\}\cs X = \{ [x_{i}, x_{j}] \sst i,j\in N\},
\]
and
\[
Z = \{ [a_{1}a_{i}^{-1}z_{i} a_{1}^{-1}, a_{1}a_{j}^{-1}z_{j}a_{1}^{-1}]\cs [a_{1}a_{i}^{-1}z_{i}a_{1}^{-1}, u] \sst i,j\in M\cs u\in U(c_{1}) \}.
\]
If $Q^{\pi_{1}'}$ is abelian, then the construction of $T''$ is given in the table below.

\renewcommand{\arraystretch}{1.3}
\begin{center}
\begin{tabular}{ l | l }
\textbf{Form of $\mathbf{S_{1}'}$}  & $\mathbf{T''}$ \\ \hline
$\prod_{i=1}^{n}[x_{i},y_{i}] $ & $\{ S_{1}'\} \cup XY$ \\ \hline
$\prod_{i=1}^{n}[x_{i},y_{i}] \prod_{i=1}^{m} c_{i}^{z_{i}} d$  & $\{S_{1}'\}\cup XY\cup Z$ \\ \hline
$\prod_{i=1}^{n}x_{i}^2$ &  $\{ S_{1}'\cs x_{1}\ldots x_{n}\}\cup X$ \\ \hline
$\prod_{i=1}^{n}x_{i}^2 \prod_{i=1}^{m} c_{i}^{z_{i}} d$  & $\{ S_{1}', x_{1}\ldots x_{n}\} \cup X\cup  Z$ \\ \hline
$\prod_{i=1}^{m} c_{i}^{z_{i}} d$ & $\{ S_{1}'\} \cup Z$ \\
\end{tabular}
\end{center}

\emph{Case $\chi(S_{1}')>-2$.}  Denote $a=x^{\pi_{1}'}$, $b=y^{\pi_{1}'}$, and $g=z^{\pi_{1}'}$.
The construction of $T''$ is given in the table below, and depends in some cases on whether  $Q^{\pi_{1}'}$ is
abelian or non-abelian.

%\vspace{1em}

\begin{center}
\begin{tabular}{l | l | l}
\textbf{Form of $\mathbf{S_{1}'}$} & \textbf{Condition on } $\mathbf{Q^{\pi_{1}'}}$ & $\mathbf{T''}$ \\ \hline
$x^{2}$ & no condition & $\{x^{2}, x\}$ \\ \hline
$x^{2}d$ & no condition & $\{x^{2}d, xa^{-1}\}$ \\ \hline
$c^{z}d$ & no condition & $\{c^{z}d, [zg^{-1}, u]\sst u\in U(c)\}$ \\ \hline
$[x,y]$ & no condition & $\{[x,y]\}$ \\ \hline
$[x,y]d$ & no condition & $\{ [x,y]d\}$ \\ \hline
%$x^{2}y^{2}$ & trivial & $\{x^{2}y^{2}, x, y\}$ \\ \hline
$x^{2}y^{2}$ & no condition & $\{x^{2}y^{2}, xy\}$  \\ \hline
$x^{2}y^{2}d$ & non-abelian & $\{x^{2}y^{2}d\}$ \\ \hline
$x^{2}y^{2}d$ & abelian & $\{x^{2}y^{2}d, xyb^{-1}a^{-1}, [x,u]\gst u\in U(ab)\}$ \\ \hline
$x^{2}y^{2}z^{2}$ & non-abelian & $\{x^{2}y^{2}z^{2}\}$ \\ \hline
$x^{2}y^{2}z^{2}$ & abelian & $\{x^{2}y^{2}z^{2}, [x,y], xyz\}$\\ \hline
$x^{2}c^{z}d$ & non-abelian & $\{x^{2}c^{z}d\}$ \\ \hline
$x^{2}c^{z}d$ & abelian & $\{x^{2}c^{z}d, xa^{-1}, [zg^{-1},u]\gst u\in U(c)\}$\\ \hline
$c_{1}^{z_{1}}c_{2}^{z_{2}}d$ & abelian & $\{c_{1}^{z_{1}}c_{2}^{z_{2}}d\}\cup Z$ \\ \hline
$c_{1}^{z_{1}}c_{2}^{z_{2}}d$ & non-abelian & $\{c_{1}^{z_{1}}c_{2}^{z_{2}}d\}$
\end{tabular}
\end{center}
\vspace{1em}

This completes the construction of $T$.  We now prove that $T$ has the desired properties.

\begin{lemma}\label{Lem:GammaTisNTQ}
The group $\Gamma_{T}$ is isomorphic to an embeddable $\Gamma$-NTQ group.  Further, the isomorphism
can be constructed effectively.
\end{lemma}
\begin{proof}
We proceed by induction on the depth of the system $S$.  For depth 0, there is nothing to prove.  Assume the statement holds for $\Gamma_{T'}$.
If $S_{1}$ has the NTQ form (\ref{NTQ3}) or (\ref{NTQ4}), then $\Gamma_{T}$ is already NTQ.

Suppose $S_{1}$ has the form (\ref{NTQ2}).  If all $u\in U$ are trivial in $\Gamma_{T'}$, the relators $[x,u]$ are trivial and $T$ is equivalent to a system of the form (\ref{NTQ3}).
Otherwise, the construction guarantees that $U'$ generates a centralizer in $\Gamma_{T'}$ hence $T$ is a system of the form (\ref{NTQ2}).

Now suppose that $S_{1}$ has the form (\ref{NTQ1}).
The variables $X_{1}''$ do not appear in $S_{1}'$, so $\Gamma_{T}\simeq \langle \Gamma_{T'}, X_{1}' \gst T''\rangle \ast F(X_{1}'')$, which is an NTQ-extension of $\langle \Gamma_{T'}, X_{1}' \gst T''\rangle$
of the form (\ref{NTQ4}).  So it suffices to consider $\langle \Gamma_{T'}, X_{1}' \gst T''\rangle$.

\emph{Cases with $\chi(S_{1}')\leq -2$.} When $Q^{\pi_{1}'}$ is non-abelian, then $\Gamma_{T}$ is an embeddable NTQ system.
Now assume that $Q^{\pi_{1}'}$ is abelian.

Since $\Gamma_{T'}$ is commutation-transitive, $[c_{1}^{a_{1}},c_{i}^{a_{i}}]=1$, hence $c_{i}^{a_{i}a_{1}^{-1}}\in C(c_{1})$ for all $i\in M$.
Consequently,
the relation
\[
[a_{1}a_{i}^{-1}z_{i}a_{1}^{-1}, c_{i}^{a_{i}a_{1}^{-1}}]=1
\]
follows from $Z$.  Conjugating by $a_{1}a_{i}^{-1}$ we obtain $[z_{i}a_{i}^{-1}, c_{i}]=1$ which may be rewritten as $c_{i}^{z_{i}} = c_{i}^{a_{i}}$.  Hence
$c_{1}^{z_{1}}\cdots c_{m}^{z_{m}} d= c_{1}^{a_{1}}\cdot c_{m}^{a_{m}} d$ follows from $Z$.
From $XY$, it  follows that $\prod_{i=1}^{n} [x_{i}, y_{i}]=1$ and from $X$ and the relator $x_{1}\ldots x_{n}$ it follows that $\prod_{i=1}^{n} x_{i}^{2}=1$.

Then in all cases, the relator $S_{1}'$ follows from the other relators and may be eliminated.  In the genus 0 case,
we can realize $\Gamma_{T}$ as an NTQ extension of the form (\ref{NTQ2}) via the change-of-variables $t_{i}=a_{1}a_{i}^{-1}z_{i}a_{1}^{-1}$,
and in the other cases $\Gamma_{T}$ is a free product with a free abelian group followed by an extension of a centralizer.

\emph{Forms $x^{2}$ and $x^{2}d$.}  Since $\langle \Gamma_{T'}, X_{1}' \gst T''\rangle\simeq \Gamma_{T'}$
via the obvious isomorphism, there is nothing to prove.

\emph{Form $c^{z}d$.}  We will show that the relator $c^{z}d$ follows from the other relators, hence can be eliminated.  Then via the isomorphism $z\rightarrow tg$,
$\Gamma_{T}\simeq \langle \Gamma_{T'}, t\gst [t,u]\cs u\in U\rangle$, which has the form (\ref{NTQ2}) with $U$ generating a centralizer as required.
We have $[zg^{-1}, c]=1$, which is equivalent to $c^{z}=c^{g}$.  Since $g$ is the image of $z$ under the solution $\pi_{1}'$, $c^{g}=d^{-1}$ hence
we have $c^{z}d=1$.

%\emph{Form $[x,y]$}. In this case, $\langle \Gamma_{T'}, X_{1}' \gst T''\rangle$ is an NTQ-extension of the form (\ref{NTQ3}).

%\emph{Form $[x,y]d$}.  This form is specifically permitted in Lemma~\ref{Lem:NTQEmbedding}, and
%$\langle \Gamma_{T'}, X_{1}' \gst T''\rangle$ is an NTQ-extension of the form (\ref{NTQ1}).

\emph{Form $x^{2}y^{2}$.} In this case, $\langle \Gamma_{T'}, X_{1}' \gst T''\rangle\simeq \Gamma_{T'}\ast \langle x\rangle$ and this group
has the NTQ form (\ref{NTQ4}).

\emph{Forms $x^{2}y^{2}d$, $x^{2}y^{2}z^{2}$, and $x^{2}c^{z}d$ with $Q^{\pi_{1}'}$ non-abelian in each case, and forms $[x,y]$ and $[x,y]d$.}
In these cases, $\langle \Gamma_{T'}, X_{1}' \gst T''\rangle$ has the embeddable NTQ form (\ref{NTQ1}).

\emph{Form $x^{2}y^{2}d$ with $Q^{\pi_{1}'}$ abelian.} In $\langle \Gamma_{T'}, X_{1}' \gst T''\rangle$ we have that $y=abx^{-1}$.  Since $x$ commutes with $ab$, we have
$x^{2}y^{2}d=x^{2}(abx^{-1})^2d = a^{2}b^{2}d = 1$.  We eliminate the relators $xyb^{-1}a^{-1}$ and $x^{2}y^{2}d$ and see that $\langle \Gamma_{T'}, X_{1}' \gst T''\rangle$ has the form (\ref{NTQ2}).

\emph{Form $x^{2}y^{2}z^{2}$ with $Q^{\pi_{1}'}$ abelian.} Since $z=y^{-1}x^{-1}$ and $x$ and $y$ commute, the first relator is trivial and we may eliminate $z$
using a Tietze transformation obtaining the form (\ref{NTQ3}).

\emph{Form $x^{2}c^{z}d$ with $Q^{\pi_{1}'}$ abelian.}  Since $x=a$, we elmininate $x$ using a Tietze transformation.
The relator $x^{2}c^{z}d$ reduces to $c^{z}da^{-2}$, so we may apply the argument for the form $c^{z}d$.

\emph{Form $c_{1}^{z_{1}}c_{2}^{z_{2}}d$.} The same reasoning from the case $\chi(S_{1}')\leq -2$ applies.

\end{proof}

\begin{lemma}\label{Lem:ConvertedNTQ}
The map $\pi_{1}'$ is a solution to $T''$ over $\Gamma_{T'}$ and
for every homomorphism $\phi\in\Phi(\Pi)$, the composition $\phi\pi: F_{S}\rightarrow \Gamma$ factors through $\gamma$.

\end{lemma}
\begin{proof}

Proceed by induction of the depth of the NTQ system $S$. For depth 0, there is nothing to prove. Now
assume that the statements hold for $F_{S'}$ and $\Gamma_{T'}$.

We begin by formulating the conditions that we must check for each case of the construction.
To check that $\pi_{1}'$ is a solution to $T''$ we must ensure that
\begin{equation}\label{Cond1}
R^{\pi_{1}'}=1,
\end{equation}
for every relator $R$ of $T''$.
%where $R$ is regarded as an element of $F_{S}$.

To prove that $\phi\pi$ factors through $\gamma$, let $\phi$ be given by $\phi=\sigma_{1}\pi_{1}\phi_{2}$ where $\phi_{2}\in\Phi(\Pi')$
and $\sigma_{1}$ is a canonical automorphism.  By induction, $\phi_{2}\pi$ factors through
$\gamma'$.  Then it suffices to prove that $\sigma_{1}\pi_{1}\gamma':F_{S}\rightarrow\Gamma_{T'}$ factors through $\gamma$.  Define
$\psi: \Gamma_{T}\rightarrow \Gamma_{T'}$ as follows.  Since $\gamma$ is surjective, every element $w'$ of $\Gamma_{T}$ has a
preimage $w$ in $F_{S}$.  Then we define
\[
(w')^{\psi} = w^{\sigma_{1}\pi_{1}\gamma'}.
\]
Provided $\psi$ is well-defined, we have that $\gamma\psi=\sigma_{1}\pi_{1}\gamma'$ by definition hence $\sigma_{1}\pi_{1}\gamma'$ factors through $\gamma$.
To check that $\psi$ is well-defined, it suffices to prove that each relator $R$ of $T''$, viewed as an element of $F_{S}$, satisfies
\begin{equation}\label{Cond2}
R^{\sigma_{1}\pi_{1}\gamma'}=1.
\end{equation}

Since (\ref{Cond1}) is the special case $\sigma_{1}=\mathrm{id}$ of (\ref{Cond2}), it suffices to prove (\ref{Cond2}) for each case of the construction.
Since $\sigma_{1}$ is an $F_{S'}$-automorphism of $F_{S}$, $\sigma_{1}\pi_{1}$ is a solution to $S_{1}$ over
$F_{S'}$ and it follows that $\sigma_{1}\pi_{1}\gamma'$ is a solution to $S_{1}$ and to $S_{1}'$ over $\Gamma_{T'}$. Hence we need only consider
cases of the construction that introduce relators other than $S_{1}$ and $S_{1}'$.  In particular, there is nothing to prove for the forms (\ref{NTQ3}) and
(\ref{NTQ4}).  We consider the remaining forms.

\emph{Form (\ref{NTQ2}).} If there exists $u_{0}\in U$ satisfying $u_{0}^{\gamma'}\neq 1$, then relators of the form
$[x,u]$ where $x\in X_{1}$ and $u\in C_{\Gamma_{T'}}(u_{0}^{\gamma'})$
are added in the construction.  Let $\bar{a}=x^{\sigma_{1}\pi_{1}\gamma'}$.  If $\bar{a}=1$ then $[x, u]^{\sigma_{1}\pi_{1}\gamma'}=1$. Otherwise,
since $[\bar{a},u_{0}^{\gamma'}]=1$ and $\Gamma_{T'}$ is commutation-transitive,
it follows that $[\bar{a},u]=1$ hence $[x,u]^{\sigma_{1}\pi_{1}\gamma'}=1$ as required.

\emph{Form (\ref{NTQ1}), with $\chi(S_{1}')\leq -2$ and $Q^{\pi_{1}'}$ abelian}. From Lemma~\ref{Lem:QAbelian}, $Q^{\sigma_{1}\pi_{1}\gamma'}$ is abelian, hence every
relator in the sets $XY$ and $X$ from the construction is sent to the identity under $\sigma_{1}\pi_{1}\gamma'$.
Let $\overline{a}_{i}=z_{i}^{\sigma_{1}\pi_{1}\gamma'}$.  In all cases,  we have
$c_{1}^{\overline{a}_{1}}\ldots c_{m}^{\overline{a}_{m}}d = 1 = c_{1}^{a_{1}}\ldots c_{m}^{a_{m}}d$.  From commutation-transitivity, all $c_{i}^{\overline{a}_{i}}$
commute and all of these commute with the non-trivial element $d$.
Similarly, all $c_{i}^{a_{i}}$ commute with each other and with $d$, so it
follows that $[c_{i}^{a_{i}}, c_{j}^{\overline{a}_{j}}]=1$ for all $i,j\in M$.

From $[c_{i}^{\overline{a}_{i}}, c_{i}^{a_{i}}]=1$ and the CSA property it follows that $[\bar{a}_{i}a_{i}^{-1}, c_{i}]=1$.  Conjugating by $a_{i}a_{1}^{-1}$ yields
\[
[a_{1}a_{i}^{-1} \bar{a}_{i} a_{1}^{-1}, c_{i}^{a_{i}a_{1}^{-1}}]=1.
\]
The equation $[c_{1}^{a_{1}}, c_{i}^{a_{i}}]=1$ gives $[c_{1}, c_{i}^{a_{i}a_{1}^{-1}}]=1$.  Then for any $u\in C(c_{1})$, transitivity implies that
$u$ commutes with $a_{1}a_{i}^{-1} \bar{a}_{i} a_{1}^{-1}$ hence $[a_{1}a_{i}^{-1} z_{i} a_{1}^{-1}, u]^{\sigma_{1}\pi_{1}\gamma'}=1$, for
all $i\in M$ and $u\in U(c_{1})$ as required.  Applying transitivity via $u$, we also obtain
$[a_{1}a_{i}^{-1} z_{i} a_{1}^{-1}, a_{1}a_{j}^{-1} z_{j} a_{1}^{-1}]^{\sigma_{1}\pi_{1}\gamma'}=1$.

\emph{Form (\ref{NTQ1}), with $\chi(S_{1}')>-2$.} Denote $\bar{a}=x^{\sigma_{1}\pi_{1}\gamma'}$, $\bar{b}=y^{\sigma_{1}\pi_{1}\gamma'}$, and $\bar{g}=z^{\sigma_{1}\pi_{1}\gamma'}$.

\emph{Form $x^{2}$.}  The additional relator is $x$. Since $\bar{a}^{2}=1$ and $\Gamma_{T'}$ is torsion-free, we have $\bar{a}=1$
hence $x^{\sigma_{1}\pi_{1}\gamma'}=1$ as required.

\emph{Form $x^{2}d$.}  The additional relator is $xa^{-1}$.  We have $\bar{a}^2=d^{-1}=a^{2}$, and $\bar{a}\neq 1$ since $d\neq 1$ and $\Gamma_{T'}$ is torsion-free,
so it follows from commutation-transitivity in $\Gamma_{T'}$ that $[\bar{a},a]=1$.
Since $\bar{a}^{2}a^{-2}=1$ and $\Gamma_{T'}$ is torsion-free, $\bar{a}=a$ hence $(xa^{-a})^{\sigma_{1}\pi_{1}\gamma'}=1$ as required.

\emph{Form $c^{z}d$.} The additional relators are $[zg^{-1}, u]$ for $u$ in $U(c)$.
Since $c^{\bar{g}}=d^{-1}=c^{g}$, we have $\bar{g}^{-1}c\bar{g}=g^{-1}c^{-1}g$ which may be rewritten as $[\bar{g}g^{-1}, c]=1$. By commutation-transitivity,
$\bar{g}g^{-1}$ commutes with $u$ hence $[zg^{-1}, u]^{\sigma_{1}\pi_{1}\gamma'}=1$ as required.

\emph{Form $x^{2}y^{2}$.} The additional relator is $xy$.  If one of $\bar{a}$, $\bar{b}$ is trivial, so is the other hence $(xy)^{\sigma_{1}\pi_{1}\gamma'}=1$.  Assume
$\bar{a}\neq 1$ and $\bar{b}\neq 1$.  Since $\bar{a}^{2}=\bar{b}^{-2}$, it follows from commutation-transitivity that $\bar{a}$ and $\bar{b}$ commute.  Then
$\bar{a}^{2}\bar{b}^{2}=1$ implies $\bar{a}\bar{b}=1$ hence $(xy)^{\sigma_{1}\pi_{1}\gamma'}=1$ as required.

\emph{Form $x^{2}y^{2}d$ with $Q^{\pi_{1}'}$ abelian.} The additional relators are $xyb^{-1}a^{-1}$ and $[x, u]$ where $u\in U(ab)$.
We have $\bar{a}^{2}\bar{b}^{2}=d^{-1}=a^{2}b^{2}$, hence $\bar{a}^{2}\bar{b}^{2}b^{-2}a^{-2}=1$.
From Lemma~\ref{Lem:QAbelian}, $Q^{\sigma_{1}\pi_{1}\gamma'}$ is abelian hence $\bar{a}$ and $\bar{b}$ commute.  It
follows that $(\bar{a}\bar{b}b^{-1}a^{-1})^{2}=1$ which implies $\bar{a}\bar{b}b^{-1}a^{-1}=1$ since $\Gamma_{T'}$ is torsion-free.  Hence
$(xyb^{-1}a^{-1})^{\sigma_{1}\pi_{1}\gamma'}=1$.

For the commutators, we have that $\bar{b}=\bar{a}^{-1}ab$ hence $\bar{a}^{2}(\bar{a}^{-1}ab\bar{a}^{-1}ab)d=1$.  Since $d=b^{-2}a^{-2}$ it follows that
$[\bar{a}, ab]=1$ and so $[\bar{a}, u]=1$ by transitivity of commutation.  Hence $[x,u]^{\sigma_{1}\pi_{1}\gamma'}=1$ as required.

\emph{Form $x^{2}y^{2}z^{2}$ with $Q^{\pi_{1}'}$ abelian.} The additional relators are $[x,y]$ and $xyz$. Since $Q^{\sigma_{1}\pi_{1}\gamma'}$ is abelian,
$[x,y]^{\sigma_{1}\pi_{1}\gamma'}=1$.  Since $\bar{a}^{2}\bar{b}^{2}\bar{c}^{2}=1$ and $\bar{a}$, $\bar{b}$, and $\bar{c}$ commute it follows from the fact that
$\Gamma_{T'}$ is torsion-free that $\bar{a}\bar{b}\bar{c}=1$ hence $(xyz)^{\sigma_{1}\pi_{1}\gamma'}=1$.

\emph{Form $x^{2}c^{z}d$ with $Q^{\pi_{1}'}$ abelian. } The additional relators are $xa^{-1}$ and $[zg^{-1}, u]$ for $u\in U(c)$.
Since $Q^{\pi_{1}\gamma'}$ and $Q^{\sigma_{1}\pi_{1}\gamma'}$ are both abelian, $c^{g}$ and $c^{\bar{g}}$ both commute with the non-trivial element $d$, hence
$[c^{g}, c^{\bar{g}}]=1$ by transitivity.  Conjugating by $\bar{g}$ gives $[c^{g\bar{g}^{-1}}, c]=1$.  From the CSA property it follows that
$[g\bar{g}^{-1}, c]=1$.  This equation may be rewritten as $c^{g}=c^{\bar{g}}$, which implies $a^{2}=\bar{a}^{2}$.  If $a=1$, then $\bar{a}=1$.
Otherwise, transitivity of commutation gives $(a\bar{a}^{-1})^{2}=1$ hence $a=\bar{a}$.  In either case, $(xa^{-a})^{\sigma_{1}\pi_{1}\gamma'}=1$.
For the commutators, transitivity of commutation gives $[\bar{g}g^{-1}, u]=1$ hence $[zg^{-1}, u]^{\sigma_{1}\pi_{1}\gamma'}=1$.

\emph{Form $c_{1}^{z_{1}} c_{2}^{z_{2}} d$ with $Q^{\pi_{1}'}$ abelian. } The argument discussed in the case $\chi(S_{1}')\leq -2$ applies.
\end{proof}

We note that if we rewrite $\Gamma_{T}$ as the coordinate group of a $\Gamma$-NTQ system $V=V_{1}\cup\ldots\cup V_{n}$, the solutions $\pi_{i}'$ give rise to
a solution sequence for $V$.

\subsection{Embedding fully residually $\Gamma$ groups}
We now combine results from the previous sections to obtain our main embedding theorem (Theorem~\ref{Thm:FinitelyManyEmbeddings}).
First, for any system of equations over $\Gamma$ we
construct a finite set of fully residually $\Gamma$ quotients through which all solutions must factor.

%Given this result, we may assume that the systems $S_{1}(X_{1},A),\ldots,S_{n}(X_{n},A)$ constructed in Lemma~\ref{Lem:RipsSela1} are in fact NTQ systems.
%For each of these systems $S_{i}$ we consider the system $S_{i}^{\pi}$ over $\Gamma$.

%In the following lemma, we construct
%homomorphisms from $G$ to the coordinate groups $\Gamma_{R(S_{i})}$, one of which must be an embedding if $G$ is fully residually $\Gamma$.

\comment{
\[
\xymatrix{
& F(Z,A) \ar[ld]_{\overline{\phantom{m}}} \ar[d]^{\rho_{i}} \\
G \ar[ddr]_{\psi} & F_{R(T_{i})} \ar[d]^{\phi} \\
& F(A) \ar[d]^{\pi} \\
& \Gamma
}
\]
}

\begin{lemma}\label{Lem:EmbeddingIntoCoordinateGroups}
There is an algorithm that, given a finitely presented group $G=\langle Z\gst S\rangle$,
produces
\begin{romanenumerate}
\item finitely many fully residually $\Gamma$ groups $G_{1},\ldots,G_{m}$, and %$\Gamma$-NTQ systems $T_{1}(X_{1},A),\ldots, T_{m}(X_{m},A)$, and
\item homomorphisms $\alpha_{i}: G\rightarrow G_{i}$% \Gamma_{T_{i}}$
\end{romanenumerate}
such that for every homomorphism $\psi: G\rightarrow \Gamma$ there exists a homomorphism
$\hat{\phi}: G_{i}\rightarrow \Gamma$ such that $\psi=\alpha_{i}\hat{\phi}$.  Further, each group $G_{i}$ has the form $G_{i}=\Gamma_{T_{i}}$ where
$T_{i}$ is an embeddable $\Gamma$-NTQ system and the homomorphism $\hat{\phi}$ is a $\Gamma$-homomorphism.
%This also holds for $G$ in the category of $\Gamma$-groups.
\end{lemma}
\begin{remark}
The lemma also holds in the category of $\Gamma$-groups, i.e. for systems of equations with constants over $\Gamma$.
\end{remark}
\begin{proof}
We will prove the lemma in the category of groups.  The proof for $\Gamma$-groups is similar.  Refer to Figure~\ref{Figure:CommDiagram} for a diagram of the maps constructed in the proof.
Note that for $G$ in the category of $\Gamma$-groups, the top group is $F(Z,A)$ rather than $F(Z)$. Denote by $\overline{\phantom{o}}: F(Z)\rightarrow G$
the canonical epimorphism.
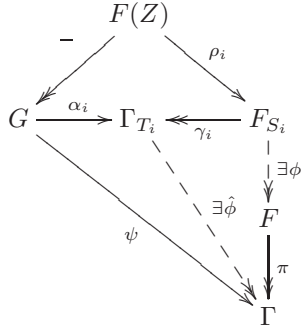
\begin{figure}[htbp]
\begin{center}
\[
\xymatrix{
&  F(Z) \ar@{->>}[ld]_{\overline{\phantom{\phi}}} \ar[rd]^{\rho_{i}} \\
G   \ar[ddrr]_{\psi} \ar[r]^{\alpha_{i}} & \Gamma_{T_{i}} \ar@{-->>}[rdd]^{\exists\hat{\phi}} &  F_{S_{i}} \ar@{->>}[l]^{\gamma_{i}} \ar@{-->>}[d]^{\exists\phi} &\\
& & F \ar@{->>}[d]^{\pi} \\
&   & \Gamma
}
\]
\caption{Homomorphisms for Lemma~\ref{Lem:EmbeddingIntoCoordinateGroups}.}
\label{Figure:CommDiagram}
\end{center}
\end{figure}

Construct the systems $S_{1}(X_{1},A),\ldots,S_{n}(X_{n},A)$
from Lemma~\ref{Lem:RipsSela1} along with the homomorphisms $\rho_{i}: F(Z)\rightarrow F_{S_{i}}$.  Applying Proposition~\ref{Prop:FreeNTQ},
we may assume that the systems $S_{1},\ldots,S_{n}$ are all $F$-NTQ systems and we have constructed the solution sequence $\Pi_{i}$ for each one.
For each system $S_{i}$, construct the corresponding $\Gamma$-NTQ system
$T_{i}$ from \S \ref{section:ConvertingNTQ} and the homomorphism $\gamma_{i}: F_{S_{i}}\rightarrow \Gamma_{T_{i}}$.
Define $\alpha_{i}: G\rightarrow \Gamma_{T_{i}}$ by  $\alpha_{i}=\overline{\rho_{i}\gamma_{i}}$,  as in Notation~\ref{Not:Lifting}. That is,
for any $\overline{u}\in G$,
\[
\overline{u}^{\alpha_{i}} = u^{\rho_{i}\gamma_{i}}.
\]

To check that $\alpha_{i}$ is well-defined, let
$u\in F(Z)$ with $\overline{u}=1$ (in $G$).  Since $u\in\nclofin{S}{F(Z)}$, there exist $s_{j}\in S$ and $w_{j}\in F(Z)$ such that
$u = \prod_{j=1}^{n} s_{j}^{w_{j}}$
hence
\[
u^{\rho_{i}\gamma_{i}} = \prod_{j=1}^{m} (s_{j}^{\rho_{i}\gamma_{i}})^{w_{j}^{\rho_{i}\gamma_{i}}}.
\]
We may assume that the relators of $G$ are in triangular form, hence $s_{j}=z_{1}z_{2}z_{3}$.
From the description of canonical representatives in Lemma~\ref{Lem:RipsSela1}, it follows that
\[
s_{j}^{\rho_{i}} = (x_{1}c_{1}x_{2}^{-1})(x_{2}c_{2}x_{3}^{-1})(x_{3}c_{3}x_{1}^{-1})
\]
where $c_{1} c_{2} c_{3}=1$ in $\Gamma$ and $x_{1},x_{2},x_{3}\in X_{i}$.
Hence
\[
s_{j}^{\rho_{i}}=(c_{1}c_{2}c_{3})^{x_{1}}.
\]
Since the relators of $\Gamma$ are elements of $T_{i}$ we have that $s_{j}^{\rho_{i}\gamma_{i}}=1$ in $\Gamma_{T_{i}}$
hence $u^{\rho_{i}\gamma_{i}}=1$ and $\alpha_{i}$ is well-defined.

Now let $\psi: G\rightarrow \Gamma$ be any homomorphism. For each
$i\in \{1,\ldots,n\}$, set
\[
\Phi_{i} = \{\overline{\rho_{i}\phi\pi}\sst \phi\in\Phi(\Pi_{i}) \},
\]
where $\overline{\rho_{i}\phi\pi}$ is defined as in Notation~\ref{Not:Lifting}.  As with $\alpha_{i}$, these homomorphisms are well-defined.
From Lemma~\ref{Lem:RipsSela1} and Proposition~\ref{Prop:FreeNTQ} we know that
\[
\mathrm{Hom} (G,\Gamma) = \bigcup_{i=1}^{n} \Phi_{i},
\]
hence $\psi=\overline{\rho_{i}\phi\pi}$ for some $\phi\in\Phi(\Pi_{i})$.  By Lemma~\ref{Lem:ConvertedNTQ}, $\phi\pi$ factors through $\Gamma_{T_{i}}$
hence there exists $\hat{\phi}: \Gamma_{T_{i}}\rightarrow \Gamma$ such that $\gamma_{i}\hat{\phi}=\phi\pi$.
Then for any $\overline{w}\in G$ we have
\[
\overline{w}^{\psi} = w^{\rho_{i}\phi\pi} =  w^{\rho_{i}\gamma_{i}\hat{\phi}} = \overline{w}^{\alpha_{i}\hat{\phi}}
\]
hence $\psi=\alpha_{i}\hat{\phi}$.
\end{proof}

\begin{corollary}\label{Cor:AtLeastOneInjective}
If $G$ is fully residually $\Gamma$, then there exists $i\in \{1,\ldots,m\}$ such that $\alpha_{i}$ is injective.  If $G$ is residually $\Gamma$, then for every $g\in G$ there
exists $i\in \{1,\ldots,m\}$ such that $g^{\alpha_{i}}\neq 1$.
\end{corollary}
\begin{proof}
If $G$ is fully residually $\Gamma$ but every $\alpha_{i}$ is non-injective, then there exists a set $\{g_{i}\}_{i=1}^{m}\subset G$  such that $g_{i}^{\alpha_{i}}=1$
for $i=1,\ldots,m$. This set cannot be discriminated since for every $\psi:G\rightarrow \Gamma$ there exists $i$ and $\phi$ such that $\psi=\alpha_{i}\hat{\phi}$,
hence $g_{i}^{\psi}=1$.
The second statement is clear.
\end{proof}

\begin{remark}
Though at least one of the homomorphisms $\alpha_{i}$ must be injective when $G$ is fully residually $\Gamma$, we are not aware of a method for determining which one
(there may be several).
\end{remark}

Before proceeding to the main result regarding embedding, we observe two corollaries.
\begin{corollary} \label{Cor:UnivTh}
The universal theory of a  torsion-free hyperbolic group is decidable.
\end{corollary}
\begin{proof} To show that the universal theory of $\Gamma$ is decidable it is equivalent to prove that the existential theory is decidable, so
we must give an algorithm to decide whether the conjunction
of a system of equations $U(X,A)=1$ and a system of inequations $V(X,A)\neq 1$ has a solution in $\Gamma.$
Apply the Lemma~\ref{Lem:EmbeddingIntoCoordinateGroups} to the $\Gamma$-group $G=\langle \Gamma, X\gst U\rangle$.
The conjunction has a solution if and only if there exists an index $i$ such that
the images of all elements from $V(X,A)$ are non-trivial in  $G_{i}.$ This we can check because the word problem in  $G_{i}$ is solvable.
\end{proof}

\begin{corollary}
Let $\Gamma=\GammaPresentation$ be a  torsion-free hyperbolic group.
There exists an algorithm that, given a  system of equations $U(X,A)=1$ over $\Gamma$, constructs a finite number of
$\Gamma$-NTQ systems $T_{i}(X_{i},A)=1$ over
$\Gamma$ that correspond to the fundamental sequences of solutions of $U(X,A)=1$
that satisfy the second restriction on fundamental sequences as in Section 7.9 in \cite{KM06}. Namely,
\begin{romanenumerate}
\item edge groups in the decompositions on each level are not mapped along this sequence into trivial elements,
\item images of QH subgroups on each level are non-cyclic,
\item images of rigid subgroups are non-cyclic.
\end{romanenumerate}
Each homomorphism  $\Gamma _{R(U)}\rightarrow \Gamma$ factors through one of these fundamental sequences.
 (Such fundamental sequences correspond to strict resolutions in Sela's terminology \cite{Sel09}.)
\end{corollary}

We may now prove the main result of the paper.
\begin{theorem}\label{Thm:FinitelyManyEmbeddings}
Let $\Gamma$ be any torsion-free hyperbolic group.
There is an algorithm that, given a finitely presented group $G$, constructs
\begin{romanenumerate}
\item finitely many groups
$H_{1},\ldots, H_{n}$, each given as a series of extensions of centralizers of $\Gamma$, and
\item homomorphisms $\phi_{i}: G\rightarrow H_{i}$,
\end{romanenumerate}
such that
\begin{arabicenumerate}
\item if $G$ is fully residually $\Gamma$, then at least one of the $\phi_{i}$ is injective, and
\item if $G$ is residually $\Gamma$, the map $\phi_{1}\times\ldots\times \phi_{n}: G\rightarrow H_{1}\times\ldots\times H_{n}$ is injective.
\end{arabicenumerate}
This also holds for $G$ in the category of $\Gamma$-groups.
\end{theorem}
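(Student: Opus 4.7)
The plan is to compose the constructions from Lemma~\ref{Lem:EmbeddingIntoCoordinateGroups} and Lemma~\ref{Lem:Embedding} in the natural way, so the argument is essentially bookkeeping once those lemmas are in hand.

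First, apply Lemma~\ref{Lem:EmbeddingIntoCoordinateGroups} to the given finitely presented group $G$ to produce $F$-NTQ systems $S_{1}(X_{1},A),\ldots,S_{m}(X_{m},A)$ together with homomorphisms $\alpha_{i}: G\rightarrow \Gamma_{R(S_{i})}$. Next, apply Lemma~\ref{Lem:Embedding} to each $S_{i}$ to obtain a group $H_{i}$, constructed from $\Gamma$ by a finite series of extensions of centralizers, together with an explicit embedding $\beta_{i}:\Gamma_{R(S_{i})}\hookrightarrow H_{i}$. Set $n=m$ and define $\phi_{i}=\alpha_{i}\beta_{i}: G\rightarrow H_{i}$. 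Both ingredients are effective by the stated algorithms, so the collection $(H_{i},\phi_{i})$ is constructed algorithmically.

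For conclusion (1), if $G$ is fully residually $\Gamma$ then Lemma~\ref{Lem:EmbeddingIntoCoordinateGroups}(1) gives an index $i$ such that $\alpha_{i}$ is injective; composing with the embedding $\beta_{i}$ preserves injectivity, so $\phi_{i}$ is injective. For conclusion (2), if $G$ is residually $\Gamma$ and $g\in G$ is nontrivial, then Lemma~\ref{Lem:EmbeddingIntoCoordinateGroups}(2) provides an index $i$ with $g^{\alpha_{i}}\neq 1$, whence $g^{\phi_{i}}=(g^{\alpha_{i}})^{\beta_{i}}\neq 1$ since $\beta_{i}$ is an embedding. Thus the diagonal map $\phi_{1}\times\cdots\times\phi_{n}$ has trivial kernel and is injective into $H_{1}\times\cdots\times H_{n}$.

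For the final sentence about the category of $\Gamma$-groups, we note that a finitely presented $\Gamma$-group has a presentation of the form $\langle Z,A\gst S,\mathcal{R}\rangle$; we reinterpret $S$ as a system of equations over $\Gamma$ with coefficients in $A$. The canonical representative reduction of Lemma~\ref{Lem:RipsSela1} was stated in this more general form and produces $F$-NTQ systems $S_{i}(X_{i},A)$ together with maps $\rho_{i}$ such that the resulting composition $\overline{\rho_{i}\phi\pi}$ is a $\Gamma$-homomorphism; the proof of Lemma~\ref{Lem:EmbeddingIntoCoordinateGroups} then goes through verbatim to yield $\Gamma$-homomorphisms $\alpha_{i}$, and Lemma~\ref{Lem:Embedding} already constructs embeddings $\beta_{i}$ that restrict to the identity on $\Gamma$. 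Since the substantive difficulties---controlling canonical representatives, handling the exceptional non-regular standard quadratic forms, and obtaining $R_{G_{i+1}}(S_{i})=\nclofin{S_{i}}{G_{i+1}}$---have already been absorbed into the earlier lemmas, the main potential obstacle at this stage is merely to check that coefficients are tracked consistently; this is routine.
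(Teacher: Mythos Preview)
Your proof is correct and follows exactly the same approach as the paper: compose the homomorphisms $\alpha_{i}$ from Lemma~\ref{Lem:EmbeddingIntoCoordinateGroups} with the embeddings $\beta_{i}$ from Lemma~\ref{Lem:Embedding} to obtain $\phi_{i}=\alpha_{i}\beta_{i}$, and read off both conclusions from the injectivity of $\beta_{i}$. The paper's own proof is in fact just this composition stated in two lines, so your version is simply a more detailed write-up of the same argument, with the $\Gamma$-group case spelled out rather than left implicit.
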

\begin{proof}
Each of the groups $G_{i}$ constructed in Lemma~\ref{Lem:EmbeddingIntoCoordinateGroups} is
embeddable by Lemma~\ref{Lem:GammaTisNTQ} hence embeds effectively into a centralizer extension $H_{i}$ of $\Gamma$ by Corollary~\ref{Cor:NTQEmbedding}.
The result then follows from Corollary~\ref{Cor:AtLeastOneInjective}.
\end{proof}

As a corollary, we obtain a polynomial-time solution to the word problem in any finitely presented residually $\Gamma$ group.
\begin{corollary}\label{Cor:WPinGammaLimit}
Let $\Gamma$ be a torsion-free hyperbolic group and $G=\GPresentation$ any finitely presented group that is known to be residually $\Gamma$.
There is an algorithm that, given a word $w$ over the alphabet $Z^{\pm}$, decides whether or not $w=1$ in $G$ in time polynomial in $\wl{w}$.
\end{corollary}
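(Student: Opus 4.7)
The plan is to reduce the word problem in $G$ to the word problem in finitely many groups $H_{1},\ldots,H_{n}$, each obtained from $\Gamma$ by a series of extensions of centralizers, using Theorem~\ref{Thm:FinitelyManyEmbeddings} as a one-time preprocessing step that depends only on $G$ and not on the input word $w$. Concretely, I would first run the algorithm of Theorem~\ref{Thm:FinitelyManyEmbeddings} on $G$ to obtain $H_{1},\ldots,H_{n}$ and homomorphisms $\phi_{i}:G\rightarrow H_{i}$ such that $\phi := \phi_{1}\times\cdots\times\phi_{n}$ is injective (since $G$ is residually $\Gamma$). Each $\phi_{i}$ is specified by a word map $z\mapsto u_{i,z}$ for $z\in Z$, where $u_{i,z}$ is a word in the generators of $H_{i}$. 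Let $C := \max\{\wl{u_{i,z}} : z\in Z,\; 1\leq i\leq n\}$; this is a constant determined by the preprocessing.

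Given a word $w$ over $Z^{\pm}$, for each $i$ I would form $w^{\phi_{i}}$ by substituting $u_{i,z}$ for every occurrence of $z^{\pm 1}$. The resulting word has length at most $C\cdot\wl{w}$ in the generators of $H_{i}$. Since $\phi$ is injective, $w=1$ in $G$ if and only if $w^{\phi_{i}}=1$ in $H_{i}$ for every $i=1,\ldots,n$. Thus the corollary reduces to the claim that the word problem in each $H_{i}$ is solvable in time polynomial in the length of the input word.

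The hard part is establishing this polynomial bound on the word problem in $H_{i}$. By Lemma~\ref{Lem:PropertiesOfGammaNTQ} (applied inductively to the tower of centralizer extensions), each $H_{i}$ is toral relatively hyperbolic. I would proceed by induction on the number of extensions in the tower. The base case $\Gamma$ admits Dehn's algorithm, which solves the word problem in linear time. For the inductive step, suppose $K' = \langle K, t \gst [C_{K}(g), t] \rangle$ is an extension of the centralizer of a hyperbolic element $g\in K$. Then $K'$ is an HNN-extension of $K$ with associated subgroup $C_{K}(g)$, which by Lemma~\ref{Lem:AlgorithmsRelativelyHyperbolic}(3) (or Remark~\ref{Rem:ParabolicsAreComputable} in the abelian case) is finitely generated abelian with a computable generating set. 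A word $v$ over the generators of $K'$ represents the identity if and only if its Britton reduction is trivial, and each reduction step requires solving the word problem in $K$ and deciding membership in $C_{K}(g)$; the latter reduces to a linear algebra computation over $\integers$ after abelianization, and is polynomial time.

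The main subtlety is bounding the total number and cost of Britton reductions. A careful analysis, tracking the number of stable letters and the length of intermediate words, gives a polynomial bound at each level of the tower. Iterating through the finitely many extensions of centralizers in $H_{i}$ preserves polynomial running time (with the degree depending on the height of the tower, which is fixed once $G$ is fixed). Combining this with the preprocessing and substitution step above completes the proof.
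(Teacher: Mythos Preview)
Your overall approach matches the paper's: precompute the embedding from Theorem~\ref{Thm:FinitelyManyEmbeddings}, reduce to the word problem in each $H_{i}$, and solve that by induction on the height of the centralizer-extension tower using Britton reductions. The structure and the polynomial bookkeeping are the same.

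There is one step you should correct. In the inductive step you write that deciding membership in $C_{K}(g)$ ``reduces to a linear algebra computation over $\integers$ after abelianization''. This does not work: abelianizing $K$ does not detect whether a given $v$ lies in the specific subgroup $C_{K}(g)$ (distinct elements of $K$ can have the same abelianization without one being in the centralizer of $g$). The paper's check is both simpler and correct: $v\in C_{K}(g)$ if and only if $[v,g]=1$ in $K$, which is a single instance of the word problem in $K$ and hence, by the inductive hypothesis, decidable in polynomial time. This also removes the need to assume $g$ is hyperbolic or to invoke Lemma~\ref{Lem:AlgorithmsRelativelyHyperbolic}(3) at this point; the commutator test works uniformly regardless of whether $C_{K}(g)$ is cyclic or a higher-rank abelian group. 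With that fix, your argument is essentially the paper's proof.
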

\begin{proof}
We compute in advance the embedding $\phi: G \rightarrow H_{1}\times \ldots \times H_{n}$, i.e. we compute $z^{\phi}$ for each $z\in Z$.
Given the input word $w$, we need only
compute $w^{\phi}$ and solve the word problem in $H_{1}\times \ldots \times H_{n}$.  There is a fixed constant $L$ such that $\wl{\pi_{H_{i}}(w^{\phi})}\leq L \wl{w}$,
where $\pi_{H_{i}}$ is projection onto $H_{i}$,
so we have a polynomial reduction to $n$ word problems in the groups $H_{1}, \ldots, H_{n}$.  It then suffices to show that each $H_{i}$ has a polynomial
time word problem.

Let $H_{i}$ be formed by a sequence of $m$ extensions of centralizers and proceed by induction.  If $m=0$, then $H_{i}=\Gamma$ so the word problem in $H_{i}$ is decidable in
polynomial time.  Now assume that
\begin{equation}\label{Eqn:HNNinduction}
H_{i} = \langle H_{i}', t \gst [t, C(u)] \rangle
\end{equation}
where $u\in H_{i}'$ and $H_{i}'$ is formed from $\Gamma$ by a sequence of $m-1$ extensions of centralizers and has a polynomial time word problem.  Let $w$ be a word
in $H_{i}$.  It suffices to produce a reduced form for $w$ as an element of the HNN-extension (\ref{Eqn:HNNinduction}):
if any $t^{\pm 1}$ appears in the reduced form then $w\neq 1$, and if no $t^{\pm 1}$ appears then
$w\in H_{i}'$ and we check whether or not $w=1$ using the word problem algorithm for $H_{i}'$.

We produce a reduced form for $w$ by examining all subwords of the form $t v t^{-1}$ and $t^{-1} v t$ where no $t^{\pm 1}$ appears in $v$, and
making reductions
\[
t v t^{-1} \rightarrow v, \;\;\;\; t^{-1} v t \rightarrow v
\]
whenever $v\in C_{H_{i}'}(u)$.  The element $v$ is in $C_{H_{i}'}(u)$ if and only if $[v,u]=1$ in $H_{i}'$,
which is an instance of the word problem in $H_{i}'$ and so may be checked in polynomial time.
It is clear that we need only examine a polynomial number of subwords $t v t^{-1}$ and
$t^{-1} v t$ before reaching a reduced form.
\end{proof}

\bibliographystyle{plain}
\bibliography{embedding}

\begin{thebibliography}{10}

\bibitem{BMR99}
G.~Baumslag, A.~Myasnikov, and V.~Remeslennikov.
\newblock Algebraic geometry over groups. {I}. {A}lgebraic sets and ideal
  theory.
\newblock {\em J. Algebra}, 219(1):16--79, 1999.

\bibitem{Bum04}
I.~Bumagin.
\newblock The conjugacy problem for relatively hyperbolic groups.
\newblock {\em Algebr. Geom. Topol.}, 4:1013--1040, 2004.

\bibitem{Dah03}
F.~Dahmani.
\newblock Combination of convergence groups.
\newblock {\em Geom. Topol.}, 7:933--963 (electronic), 2003.

\bibitem{Dah08}
F.~Dahmani.
\newblock Finding relative hyperbolic structures.
\newblock {\em Bull. Lond. Math. Soc.}, 40(3):395--404, 2008.

\bibitem{Dah09}
F.~Dahmani.
\newblock Existential questions in (relatively) hyperbolic groups.
\newblock {\em Israel J. Math.}, 173:91--124, 2009.

\bibitem{EH01}
D.~Epstein and D.~Holt.
\newblock Computation in word-hyperbolic groups.
\newblock {\em Internat. J. Algebra Comput.}, 11(4):467--487, 2001.

\bibitem{Far98}
B.~Farb.
\newblock Relatively hyperbolic groups.
\newblock {\em Geom. Funct. Anal.}, 8(5):810--840, 1998.

\bibitem{Gro05}
D.~Groves.
\newblock Limit groups for relatively hyperbolic groups. {II}.
  {M}akanin-{R}azborov diagrams.
\newblock {\em Geom. Topol.}, 9:2319--2358, 2005.

\bibitem{Gro09}
D.~Groves.
\newblock Limit groups for relatively hyperbolic groups. {I}. {T}he basic
  tools.
\newblock {\em Algebr. Geom. Topol.}, 9(3):1423--1466, 2009.

\bibitem{KM98a}
O.~Kharlampovich and A.~Myasnikov.
\newblock Irreducible affine varieties over a free group. {I}. {I}rreducibility
  of quadratic equations and {N}ullstellensatz.
\newblock {\em J. Algebra}, 200(2):472--516, 1998.

\bibitem{KM98b}
O.~Kharlampovich and A.~Myasnikov.
\newblock Irreducible affine varieties over a free group. {II}. {S}ystems in
  triangular quasi-quadratic form and description of residually free groups.
\newblock {\em J. Algebra}, 200(2):517--570, 1998.

\bibitem{KM05JSJ}
O.~Kharlampovich and A.~Myasnikov.
\newblock Effective {JSJ} decompositions.
\newblock In {\em Groups, languages, algorithms}, volume 378 of {\em Contemp.
  Math.}, pages 87--212. Amer. Math. Soc., Providence, RI, 2005.

\bibitem{KM05Implicit}
O.~Kharlampovich and A.~Myasnikov.
\newblock Implicit function theorem over free groups.
\newblock {\em J. Algebra}, 290(1):1--203, 2005.

\bibitem{KM06}
O.~Kharlampovich and A.~Myasnikov.
\newblock Elementary theory of free non-abelian groups.
\newblock {\em J. Algebra}, 302(2):451--552, 2006.

\bibitem{KM09}
Olga Kharlampovich and Alexei Myasnikov.
\newblock Limits of relatively hyperbolic groups and {L}yndon's completions.
\newblock {\em J. Eur. Math. Soc. (JEMS)}, 14(3):659--680, 2012.

\bibitem{LS77}
R.~Lyndon and P.~Schupp.
\newblock {\em Combinatorial Group Theory}.
\newblock Springer, 1977.

\bibitem{MR96}
A.~Myasnikov and V.~Remeslennikov.
\newblock Exponential groups. {II}. {E}xtensions of centralizers and tensor
  completion of {CSA}-groups.
\newblock {\em Internat. J. Algebra Comput.}, 6(6):687--711, 1996.

\bibitem{Osi06IJAC}
D.~Osin.
\newblock Elementary subgroups of relatively hyperbolic groups and bounded
  generation.
\newblock {\em Internat. J. Algebra Comput.}, 16(1):99--118, 2006.

\bibitem{Osi06Memoirs}
D.~Osin.
\newblock Relatively hyperbolic groups: intrinsic geometry, algebraic
  properties, and algorithmic problems.
\newblock {\em Mem. Amer. Math. Soc.}, 179(843):vi+100, 2006.

\bibitem{RS95}
E.~Rips and Z.~Sela.
\newblock Canonical representatives and equations in hyperbolic groups.
\newblock {\em Invent. Math.}, 120(3):489--512, 1995.

\bibitem{Sel01}
Z.~Sela.
\newblock Diophantine geometry over groups. {I}. {M}akanin-{R}azborov diagrams.
\newblock {\em Publ. Math. Inst. Hautes \'Etudes Sci.}, (93):31--105, 2001.

\bibitem{Sel06}
Z.~Sela.
\newblock Diophantine geometry over groups. {VI}. {T}he elementary theory of a
  free group.
\newblock {\em Geom. Funct. Anal.}, 16(3):707--730, 2006.

\bibitem{Sel09}
Z.~Sela.
\newblock Diophantine geometry over groups. {VII}. {T}he elementary theory of a
  hyperbolic group.
\newblock {\em Proc. Lond. Math. Soc. (3)}, 99(1):217--273, 2009.

\end{thebibliography}

\end{document}